\documentclass[10pt]{article}
\usepackage{graphicx}
\usepackage{epsfig}
\usepackage{amssymb}
\usepackage{amsmath}
\usepackage{amsthm}
\voffset=-2cm
 \setlength{\textwidth}{16.5cm}
 \setlength{\textheight}{23.1cm}

 \setlength{\topmargin}{3cm}
 \addtolength{\topmargin}{-1in}
 \addtolength{\topmargin}{-1\headheight}

 \addtolength{\topmargin}{-1\headsep}
 \setlength{\oddsidemargin}{2.5cm}
 \addtolength{\oddsidemargin}{-1in}

 \setlength{\evensidemargin}{2.5cm}
 \addtolength{\evensidemargin}{-1in}

\newtheorem{theorem}{Theorem}[section]

{\end{description}}
{\end{description}}

{\hfill $\bullet$ \\}


\newcommand{\be}{\begin{equation}}
\newcommand{\ee}{\end{equation}}


%

%

%

%
   \begin{document}
    \title{A Fast Second-Order Explicit Predictor-Corrector Numerical Technique To Investigating And Predicting The Dynamic Of Cytokine Levels And
    Human Immune Cell Activation In\\ Response To Gram-Positive Bacteria: Staphylococcus Aureus}
    \author{Eric Ngondiep$^{\text{\,1}}$, Ariane Njomou Ndantouo$^{\text{\,2,3}}$, George Mondinde Ikomey$^{\text{\,2,3}}$}
   \date{$^{\text{\,1\,}}$\small{Department of Mathematics and Statistics, College of Science, Imam Mohammad Ibn Saud\\ Islamic University
        (IMSIU), $90950$ Riyadh $11632,$ Saudi Arabia.}\\
     \text{\,}\\
       $^{\text{\,2\,}}$\small{School of Health Sciences, Catholic University of Central Africa, $1110$ Messa-Yaounde, Cameroon.}\\
     \text{\,}\\
     $^{\text{\,3\,}}$\small{Center for the Study and Control of Communicable Diseases (CSCCD), Faculty of Medicine\\ and Biomedical Sciences, University of Yaounde 1, $8445$ Yaounde, Cameroon.}\\
     \text{\,}\\
        \textbf{Correspondence email:} ericngondiep@gmail.com/engondiep@imamu.edu.sa}

    \maketitle
   \textbf{Abstract.}
   This paper develops a second-order explicit predictor-corrector numerical approach for solving a mathematical model on the dynamic of cytokine expressions and human immune cell activation in response to the bacterium staphylococcus aureus (S. aureus). The proposed algorithm is at least zero-stable and second-order accurate. Mathematical modeling works that analyze the human body in response to some antigens have predicted concentrations of a broad range of cells and cytokines. This study deals with a coupled cellular-cytokine model which predicts cytokine expressions in response to gram-positive bacteria S. aureus. Tumor necrosis factor alpha, interleukin 6, interleukin 8 and interleukin 10 are included to assess the relationship between cytokine release from macrophages and the concentration of the S. aureus antigen. Ordinary differential equations are used to model cytokine levels while the cellular responses are modeled by partial differential equations. Interactions between both components provide a more robust and complete systems of immune activation. In the numerical simulations, a low concentration of S. aureus is used to measure cellular activation and cytokine expressions. Numerical experiments indicate how the human immune system responds to infections from different pathogens. Furthermore, numerical examples suggest that the new technique is faster and more efficient than a large class of statistical and numerical schemes discussed in the literature for systems of nonlinear equations and can serve as a robust tool for the integration of general systems of initial-boundary value problems.
   \text{\,} \\
    \text{\,}\\
   \ \noindent {\bf Keywords: mathematical model on dynamic of mixed cellular-cytokine, cytokine expressions, human immune cell activation, S. aureus, a second-order explicit predictor-corrector numerical technique, numerical simulations.} \\
   \\
   {\bf AMS Subject Classification (MSC). 65Lxx}.

      \section{Introduction and motivation}\label{sec1}
      Mathematical models arising from the complex biological systems are important decision tools that allow to elucidate emergent properties of intricate biological pathways with the human body \cite{1tgq,2tgq,4tgq,3tgq,5tgq}. The mathematical modeling works based on the human immune cells in response to pathogens have predicted the concentrations of a broad range of cells including: B-Cell, dentritic, macrophages and plasma cells. The leukocytes of the immune system are usually the neutrophils, monocytes, basophiles eosinophiles and lymphocytes. They play a very important role in recognizing and fighting infections. Their main function is to eliminate pathogens by phagocytosis and cell debris through engulfment and chemical degradation. The interaction between the bacterium and host during infection leads to immune cell activation and often to a change in the relative ratio of leukocyte sub-populations of the innate immune and adaptive immune responses \cite{6tgq, 1prr}. Raman spectroscopy based on spectroscopic fingerprint in clinical samples are efficient approaches to identify and differentiate these subsets of leukocytes \cite{14prr,16prr,18prr}. Furthermore, this technique also detects immune cells activation and apoptosis \cite{20prr,25prr,23prr,22prr}.\\

      Adaptive immunity deals with a long-term specific response initiated to eliminate a well-known pathogen whereas innate immunity considers many lines of defense starting with: saliva, skin, various secretions and ending with non-specific leukocytes \cite{7tgq,8tgq}. Additionally, the innate immune system is capable to recognize and tag a wide set of antigens. The inability of innate immune response to kill the pathogen leads to the activation of the adaptive immune system, which is firstly composed with T cells and B cells. The functions of B cells is to produce  antibodies to neutralize the pathogen and  subsequently destroy them. Furthermore, T and B cells contribute in the production of small signaling proteins released by leukocytes, so called cytokines, which facilitate the communication between immune cells. These cytokines include: tumor necrosis factor alpha (TNF$\alpha$), interleukin 6 (IL6), interleukin 8 (IL8) and interleukin 10 (IL10) \cite{9tgq,10tgq}. TNF$\alpha$ is fundamental to the acute phase reaction during inflammatory response. IL6 is a pro-inflammatory cytokine that is involved in inflammation and homeostasis processes and can also act as an anti-inflammatory cytokine through its inhibitory effects on TNF$\alpha$. In addition, IL6 plays a crucial role in the recruitment of T cells and in the production of T and B cells during inflammation and the delay in apoptosis of T cells \cite{11tgq}. In the literature \cite{12tgq,13tgq,14tgq,15tgq}, the authors have shown that IL8 is induced by TNF$\alpha$, inhibited by IL10, that is an anti-inflammatory cytokine critical in the regulation of immune response and IL8 is also involved in the recruitment of basophils, neutrophils and T cells. Moreover, IL10 maintains homeostasis and prevents host damage during infection while acting as an immuno-regulator. Furthermore, this anti-inflammatory cytokine limits the production of pro-inflammatory cytokine including IL6, while down-regulating the expression of TNF$\alpha$, T helper type 1 cytokines, and major histocompatibility complex class 2 molecules \cite{11tgq,17tgq}. IL6, IL8 and IL10 exert many effects on the immune system, hematopoiesis and acute-phase response. In \cite{15kztw,19kztw}, the authors showed that monocytes, lymphocytes, cancer cells as well as macrophages have been documented for the production and secretion of IL6, IL8 and IL10. These cytokines in a para-crime manner induce in vitro growth of melanoma cells, prostate, lung, ovarian, kidney and cervical cancers. Specifically, the disease level and the activity of immune system mechanisms modulated by some interleukins such as: IL6, IL8 and IL10, allow to evaluate the efficiency of the treatment and prognosis in course of malignancy. For more details, we refer the readers to \cite{9kztw,16kztw,6kztw,4kztw,10kztw}.\\

       Although present in all bacterial cell walls, peptidoglycan (PePG) helps to stabilize cell structure and shape. PePG protects cells from bursting in response to environmental stressors. Similarly, lipoteichoic acid (LTA), known as a gram-positive bacterium, is composed with a glycolipid covalently bound and a hydrophilic glycerophosphate polymer. Innate immune system deals with peptidoglycan and lipoteichoic acid which are able to trigger the systemic release of cytokines \cite{18tgq,19tgq}. Lipopolysaccharide (LPS), a hallmark of gram-negative bacteria, commonly called endotoxin, consists of a membrane-anchor lipid, glycan polymer and oligosaccharide core. It lies in the class of the most potent immuno-stimulants. It has been proven \cite{20tgq,21tgq} that pro-inflammatory and anti-inflammatory cytokine activation in gram-negative bacterial infections are mainly driven by endotoxin. Analogously, one cause of the pro-inflammatory cytokine activity follows from the physiological recognition of the lipid component by the immune system \cite{22tgq,23tgq,24tgq,21tgq}. PePG and LTA are the main sources of activation of cytokines in response to gram-positive bacterial infections whose levels can be predicted by the use of mathematical models \cite{19tgq,25tgq,4tgq,26tgq,27tgq}.

      Complex systems of differential equations arising in mathematical biology and physics play a major role in various branches of science and engineering by describing their interaction among the diffusion transport and reaction mechanism. A large class of unsteady nonlinear differential equations such as: the  mathematical models of covid-19, Navier-Stokes problems, Stokes-Darcy models, convection-diffusion-reaction equations, time-fractional equations, advection-diffusion problem, mathematical models of dynamic of poverty and corruption, etc.., are introduced to capture some features of several real-world biological and physical phenomena \cite{1en,30tgq,2en,31tgq,3en,4en,32tgq,5en}. For example: traffic flow, plasma physics, fluid mechanics, optical fibers, chemical kinetics, mathematical immunology, population dynamics, neutron nuclear reaction, financial derivatives pricing \cite{3xws,2xws}. A wide range of applications requires the analytical solutions of such equations, which allow to well understand the interesting features, novel phenomena and intrinsic mechanism hidden in the dynamic systems \cite{4xws}. Since the exact solutions of these equations only exist under some restrict conditions, develop efficient and reliable numerical techniques for such systems of unsteady nonlinear equations is of great interest. In the literature, researchers have described and analyzed abundant statistical and numerical approaches in an approximate solutions \cite{9krbf,13krbf,9en,10en,17krbf,18krbf,11en,13en,20krbf,14en,21krbf,28tgq,33tgq,34tgq}. Most of the existing mathematical models of complex biological systems deal with the cytokine response to lipopolysaccharide and the basic pathway of the standard immune response to S. aureus. Although these models represent responses to several pathogens such as: gram-positive and gram-negative bacteria, the cytokine expressions are computed as functions of activated macrophages. This suggests similar interactions between cytokines and macrophages \cite{35tgq,36tgq,28tgq}.\\

       In this paper, the mathematical model considers a system of ordinary differential equations (ODEs) coupled with partial differential ones (PDEs) to emulating the relationship between the pathogen S. aureus, cytokines and cells with the human immune system. Specifically, a second-order explicit predictor-corrector scheme is developed to investigating and predicting the cells-based activation and cytokine expressions induced by the bacterium S. aureus. Both theoretical and numerical results indicate that the proposed algorithm is less time consuming, faster and more efficient than a large class of statistical and numerical approaches applied to such complex systems of differential equations \cite{18krbf,29tgq,7en,17krbf,8en,20krbf}. In addition, our results show that the developed numerical method can be observed as a robust tool to predict ex vivo and in vivo experimental data induced by a given antigen. The highlights of this work are the following:
      \begin{description}
        \item[i.] mathematical formulation of the dynamic of cytokine concentrations and activation of human immune cells in response to the bacterium S. aureus,
        \item[ii.] construction of the explicit predictor-corrector scheme and analysis of stability and convergence rate of the proposed algorithm,
        \item[iii.] numerical examples to confirm the theory.
      \end{description}

      The paper is organized as follows: we present the mathematical model on the dynamic of cytokine levels and human immune cell activation in response to the propagation of S. aureus in Section $\ref{sec2}$. In Section $\ref{sec3}$, we develop the two-step explicit numerical technique for solving the considered problem and we provide both stability analysis and convergence order of the new algorithm. Section $\ref{sec4}$ considers some numerical simulations whereas the general conclusions and future works are described in Section $\ref{sec5}$.

      \section{Mathematical formulation of the model}\label{sec2}
      This section deals with a coupled inflammatory cytokine model and cellular one as proposed in \cite{28tgq,26tgq} (see also Figure $\ref{fig c1}$). This combination is so called mixed cellular-cytokine model and it represents an extension of some previous works discussed in the literature \cite{4tgq,3tgq,1tgq,30tgq,31tgq} by considering the inter-connectivity between the cytokine and cellular responses and captures ex vivo and potential in vivo. The cellular model given in \cite{36tgq} predicts the relationships between activated and resting macrophages, antibodies and S. aureus whereas the cytokine model analyzes how changes in activated macrophages and resting ones impact cytokine evolution for TNF$\alpha$, IL6, IL8 and IL10 \cite{28tgq,36tgq}. Both models operate in the same time frame whenever the concentrations of the initial bacteria and the activated macrophages are scaled to match those reported in the ex vivo studies \cite{28tgq,12tgq,35tgq}. The coupled cellular-cytokine model is described by a system of PDEs and ODEs and it will be efficiently solved using a new second-order explicit predictor-corrector method. The cellular model is represented by the PDEs while the cytokine one considers the ODEs. It is worth noticing to mention that the gram-positive bacteria (for instance: S. aureus) causes the wide-spread inflammation and septic shock that is primarily due to the function LPA and PePG during an inflammatory response \cite{37tgq,20tgq,11tgq}. The work carried out by LTA and PePG induces the cytokine concentration in the host's innate and adaptive immune response to S. aureus pathogen. In \cite{20tgq,11tgq,37tgq}, researchers have established that the action mechanisms in the host which include: neutrophil flux, phagocytosis and Sbi protein activation differ from the wall components and the given pathogen, but indicate the same inflammatory cytokine responses.

       \begin{figure}
         \begin{center}
          \begin{tabular}{c c}
         \psfig{file=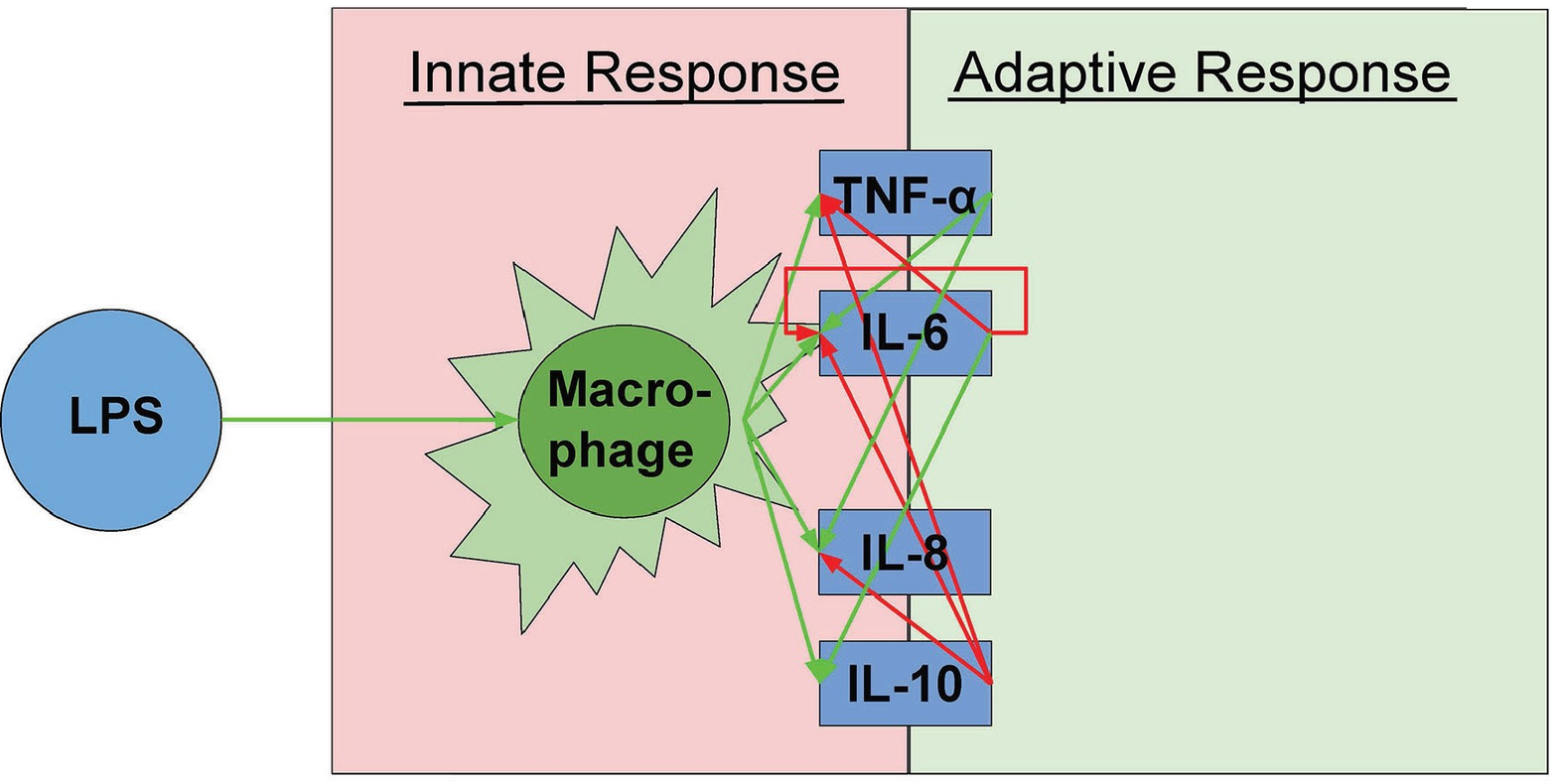,width=5cm} & \psfig{file=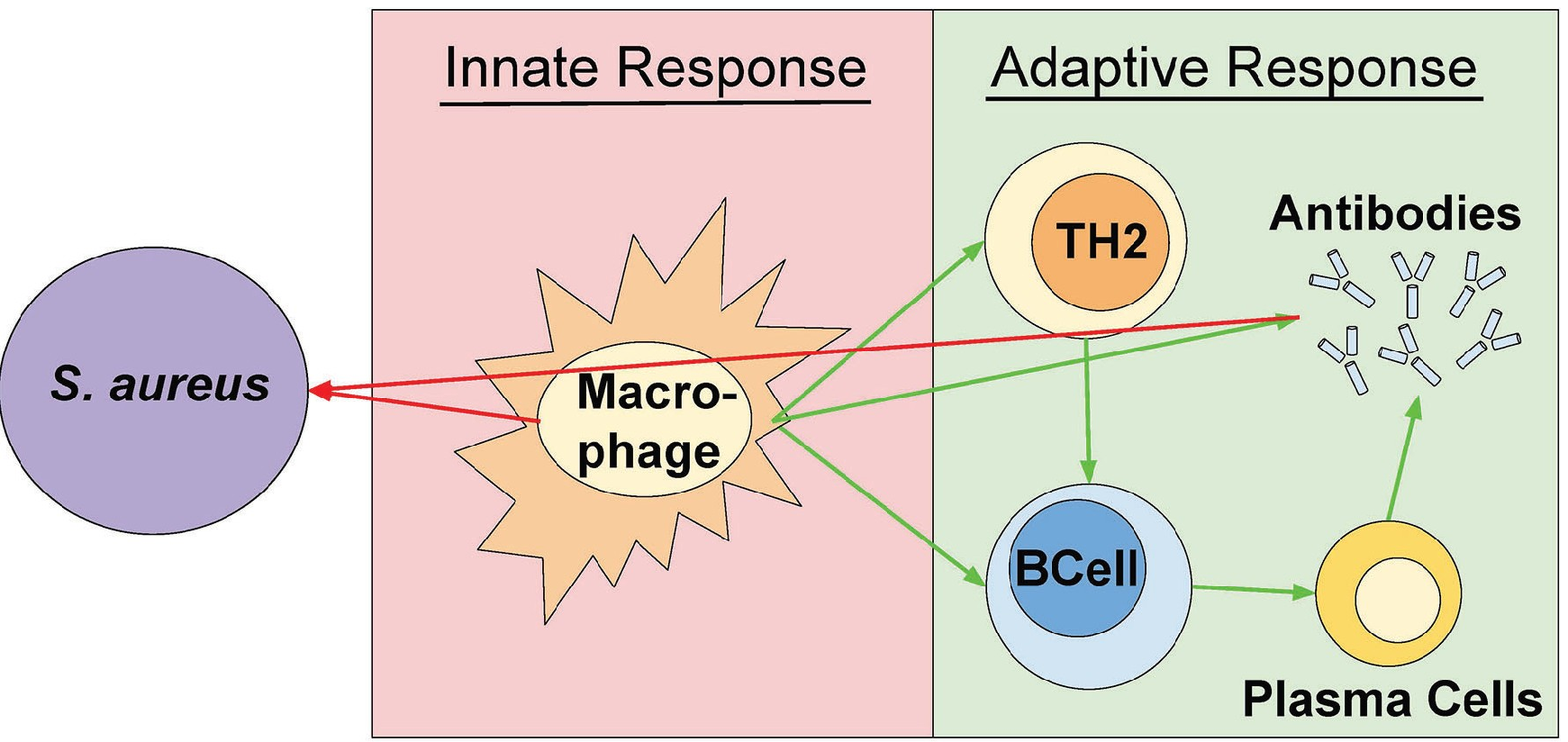,width=5cm} \\
         \psfig{file=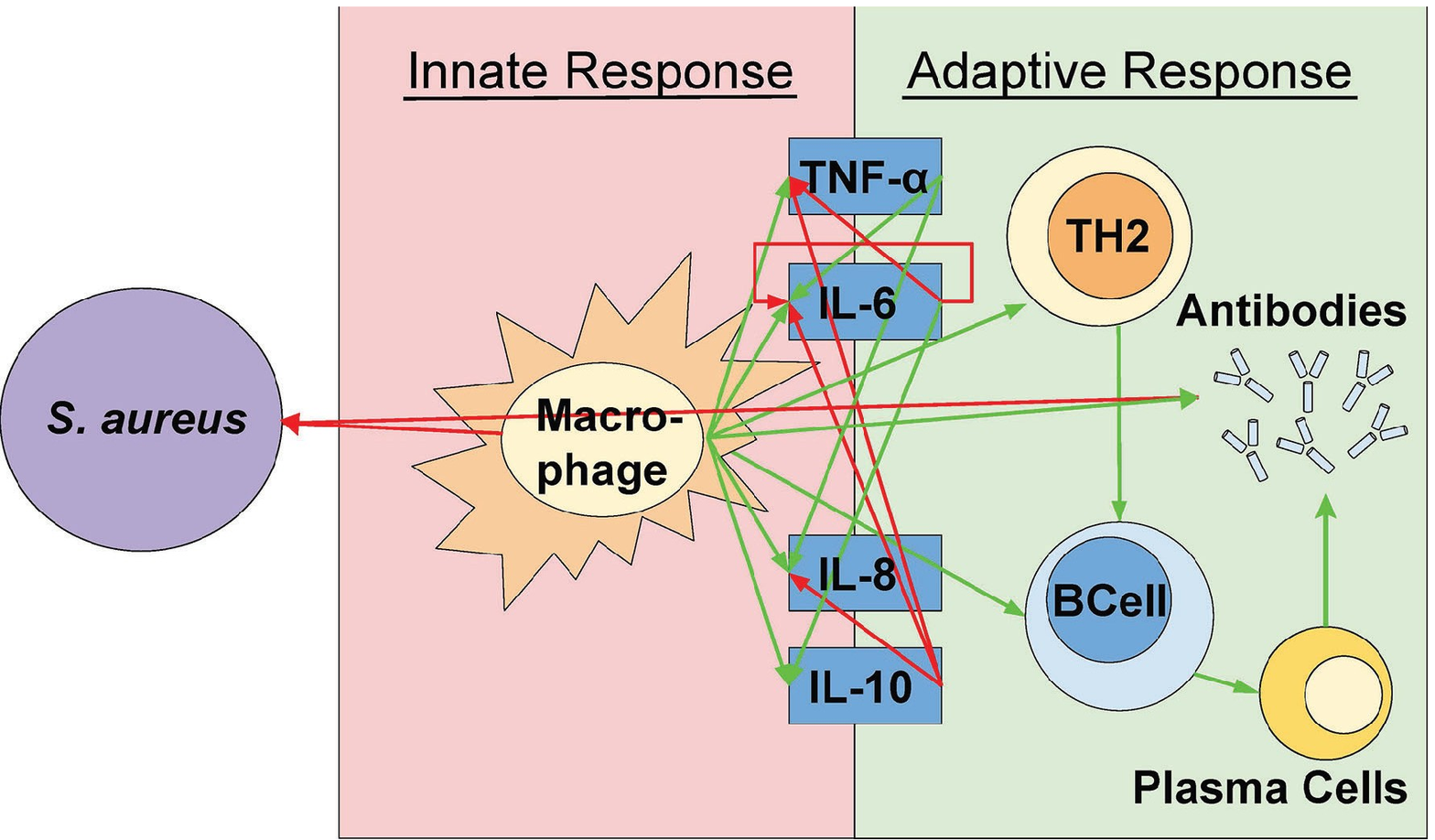,width=5cm} & \psfig{file=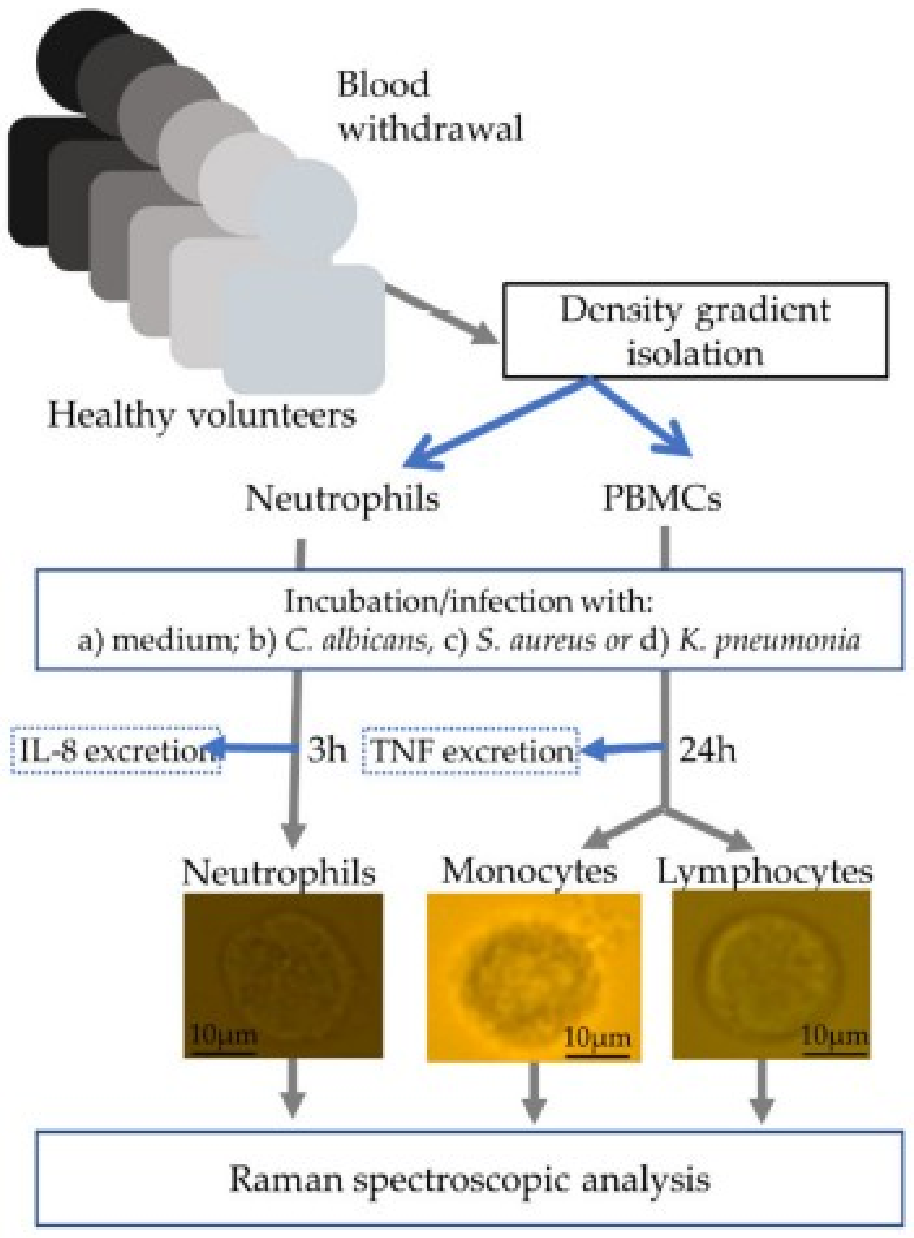,width=5cm} \\
          \end{tabular}
          \end{center}
         \caption{Representation of: cytokine model, cellular one, mixed cellular-cytokine model and Raman spectroscopic analysis}
        \label{fig c1}
          \end{figure}
      \subsection{Cellular model}
      The cellular model discussed in \cite{10tgq,33tgq} predicts the activation of the acquired immune response to S. aureus as a function of concentrations of lymphocytes, antibodies and plasma cells, denoted by: (T,B), F and A, respectively, together with activated and resting macrophages expressions, bacteria and antibodies spatially distributed in $1$ $cm^{3}$ of lump tissue $(x=(x_{1},x_{2},x_{3}))$ and represented by: M, A and F, respectively. The variation of concentrations of lymphocytes, antibodies and plasma cells is observed at the nearest of the lymph node. In this paper, we assume that the activated macrophages behave as antigens presenting cells and they move to the nearest lymph node where the specific response is triggered. So, these macrophages are obtained by interacting in space with the antigen in the tissue $(M_{A}(x,t))$ and at the lymph node $(M_{A}^{L}(t))$ by interacting with the lymphocyte expressions varying only in time. Thus, the dependent variables dealing with the cellular model are described as follows.
      \begin{itemize}
        \item Spatial variables $(pg/mm^{3})$: S. aureus pathogen ($y_{1}(x,t)$), resting macrophages ($y_{2}(x,t)$), activated macrophages ($y_{3}(x,t)$) and specific antibodies $(F(x,t))$.
        \item Temporal variables $(pg/mm^{3})$: plasma cells $(P(t))$, T-lymphocytes $(T(t))$, antibodies $(F^{l}(t))$, B-lymphocytes $(B(t))$ and average activated macrophages $(M_{l}^{a}(t))$.
      \end{itemize}

      \subsection{Coupled cellular-cytokine model}
      This work considers the dynamic of mixed cellular-cytokine model given in \cite{tgq} and described by a system of ordinary and partial differential equations. Assuming that the average tissue concentration obtained from the cellular model is the initial condition, the expected concentrations of TNF$\alpha$, IL6, IL8 and IL10 are functions of resting macrophages $y_{2}(x,t)$ and the activated ones $y_{3}(x,t)$. In some previous works, the authors developed this model to study the response of cytokines to LPS. Specifically, their findings have shown that a combination of cytokine model and the cell-based one is "well-defined" because LPS and pathogen-associated molecular patterns (PAMP) induce similar pro-inflammatory and anti-inflammatory responses \cite{38tgq}. The considered coupled model deals with the following unsteady variables: $y_{1}(x,t)$ indicates the bacteria, $y_{2}(x,t)$ and $y_{3}(x,t)$ denote resting and activated macrophages, respectively, while $y_{4}(t)$, $y_{5}(t)$, $y_{6}(t)$ and $y_{7}(t)$ represent: tumor necrosis factor alpha, interleukin 6, interleukin 8 and interleukin 10, respectively. We remind that tumor necrosis factor alpha, interleukin 6 and interleukin 8 are pro-inflammatory whereas interleukin 10 is an anti-inflammatory cytokine that plays an important role in the regulation of immune responses. Furthermore, interleukin 10 reduces the production of some pro-inflammatory cytokines (for example: interleukin 6 and tumor necrosis factor alpha).\\

      Let $\Omega$ be a bounded domain of $\mathbb{R}^{d}$, where $d=2$ or $3$, and $T$ be a positive real number. The initial-boundary value problem modeled by the S. aureus bacterium ($y_{1}(\cdot)$) is defined as
      \begin{equation}\label{1}
        \left\{
          \begin{array}{ll}
            \frac{\partial y_{1}}{\partial t}=\beta_{1}y_{1}(1-k_{1}^{-1}y_{1})-\mu_{1}y_{1}-\lambda_{2}y_{1}y_{2}-\lambda_{3}y_{1}y_{3}, & \hbox{on $\Omega\times[0,\text{\,}T]$} \\
            \text{\,}\\
            y_{1}(x,0)=\overline{y}_{1},\text{\,\,\,on\,\,\,}\overline{\Omega},\text{\,\,\,\,\,\,\,\,\,}\frac{\partial y_{1}}{\partial t}(x,t)=0, & \hbox{on $\partial\Omega\times[0,\text{\,}T]$.}
          \end{array}
        \right.
      \end{equation}
      Here: $\beta_{1}y_{1}(1-k_{1}^{-1}y_{1})$ denotes the logistic growth of the bacteria, -$\mu_{1}y_{1}$ indicates the natural decay rate of S. aureus without immune system processes through the natural decay coefficient $\mu_{1}$, the terms: -$\lambda_{2}y_{1}y_{2}$ and -$\lambda_{3}y_{1}y_{3}$ describe the phagocytosis of the pathogen S. aureus through resting macrophages and activated ones. the coefficients: $\beta_{1}$, $k_{1}$, $\mu_{1}$, $\lambda_{2}$ and $\lambda_{3}$, are positive constants which represent the carrying capacity, replication rate, natural decay and declined caused by resting and activated macrophages, respectively.\\

      Up-regulation and down-regulation for each cytokine are modeled by the use of sigmoidal functions defined as
      \begin{equation}\label{2}
      H_{c}^{u}(y)=\frac{y}{\eta_{xy}+y}\text{\,\,\,\,\,or\,\,\,\,\,}H_{c}^{d}(y)=\frac{\eta_{xy}}{\eta_{xy}+y},
      \end{equation}
      where $y$ is the cytokine inducing down-regulation (superscript: d) or up-regulation (superscript: u) of cytokine "c", $\eta$ denotes the half-maximum value. We add sigmoidal functions given by $(\ref{2})$ in all the equations dealing with cytokines to describe the relationship between cytokines (see Table 1 below).\\

      The mathematical model of resting macrophages ($y_{2}(\cdot)$) response to the bacterium S. aureus is given by
      \begin{equation}\label{3}
        \left\{
          \begin{array}{ll}
            \frac{\partial y_{2}}{\partial t}=\mu_{2}y_{2}(1-y_{2m}^{-1}y_{2})-\left[\gamma_{3}+k_{6}H_{c}^{u}(y_{4})H_{c}^{d}(y_{7})\right]y_{1}y_{2}, & \hbox{on $\Omega\times[0,\text{\,}T]$} \\
            \text{\,}\\
            y_{2}(x,0)=\overline{y}_{2}, & \hbox{on $\overline{\Omega}$}
          \end{array}
        \right.
      \end{equation}
       where $\mu_{2}y_{2}(1-y_{2m}^{-1}y_{2})$ represents the growth of the resting macrophages and -$\left[\gamma_{3}+k_{6}H_{c}^{u}(y_{4})H_{c}^{d}(y_{7})\right]y_{1}y_{2}$ designates macrophages activation at the rate $\gamma_{3}$ and $k_{6}$ in response to S. aureus and taking into account the cytokines $y_{4}(t)$ and $y_{7}(t)$ influences, respectively, $\mu_{2}$ is the influx rate associated with $y_{2}(x,t)$, $y_{2m}$ is the maximum of $y_{2}(x,t)$ on the domain $\Omega\times[0,\text{\,}T]$, $\overline{y}_{2}$ is the initial condition defined as the average of resting macrophages in the tissue. This average is an outcome of the cellular model simulation over $24h$.\\

       Activated macrophages ($y_{3}(\cdot)$) are modeled as follows
      \begin{equation}\label{4}
        \left\{
          \begin{array}{ll}
            \frac{\partial y_{3}}{\partial t}=-\mu_{3}y_{3}+\left[\gamma_{3}+k_{6}H_{c}^{u}(y_{4})H_{c}^{d}(y_{7})\right]y_{1}y_{2}, & \hbox{on $\Omega\times[0,\text{\,}T]$} \\
            \text{\,}\\
            y_{3}(x,0)=\overline{y}_{3}, & \hbox{on $\overline{\Omega}$.}
          \end{array}
        \right.
      \end{equation}
        In equation $(\ref{4})$, the term -$\mu_{3}y_{3}$ denotes the decay of $y_{3}(x,t)$ at rate $\mu_{3}$, $\left[\gamma_{3}+k_{6}H_{c}^{u}(y_{4})H_{c}^{d}(y_{7})\right]y_{1}y_{2}$ is the macrophages activation at the rates $\gamma_{3}$ and $k_{6}$ considering the influence of the cytokines $y_{4}(t)$ and $y_{7}(t)$. $\overline{y}_{3}$ is the initial condition, that is defined as the average of activated macrophages in the tissue. That is, an outcome of the cell-based model simulations during $24h$. Furthermore, the initial and boundary (of the tissue) concentrations of both $y_{2}(\cdot)$ and $y_{3}(\cdot)$ are constant.\\

        The dynamic of tumor necrosis factor alpha ($y_{4}(\cdot)$) is described as
      \begin{equation}\label{5}
        \left\{
          \begin{array}{ll}
            \frac{dy_{4}}{dt}=k_{7}H_{c}^{d}(y_{5})H_{c}^{d}(y_{7})y_{3}-k_{2}(y_{4}-q_{4}), & \hbox{on $\Omega\times[0,\text{\,}T]$} \\
            \text{\,}\\
            y_{4}(0)=\overline{y}_{4}:=0, & \hbox{}
          \end{array}
        \right.
      \end{equation}
        where, the first term: $k_{7}H_{c}^{d}(y_{5})H_{c}^{d}(y_{7})y_{3}$ denotes the down-regulation interactions that cytokines $y_{5}(\cdot)$ and $y_{7}(\cdot)$ have with the growth of $y_{4}(\cdot)$ (at rate $k_{7}$) mediated by the average concentrations of $y_{3}(\cdot)$, the second term: $-k_{2}(y_{4}-q_{4})$ represents the natural decay of $y_{4}(\cdot)$ at rate $k_{2}$. $q_{4}>0$, is a constant so called a resting level while $\overline{y}_{4}$ is the initial condition. Equation $(\ref{5})$ suggests that the rate of change of $y_{4}(\cdot)$ depends on $y_{3}(\cdot)$.\\

         The equation that models the evolution of interleukin 6 ($y_{5}(\cdot)$) is given by
      \begin{equation}\label{6}
        \left\{
          \begin{array}{ll}
            \frac{dy_{5}}{dt}=(k_{8}+k_{9}H_{c}^{u}(y_{4}))H_{c}^{d}(y_{5})H_{c}^{d}(y_{7})y_{3}-k_{3}(y_{5}-q_{5}), & \hbox{on $\Omega\times[0,\text{\,}T]$} \\
            \text{\,}\\
            y_{5}(0)=\overline{y}_{5}:=0, & \hbox{}
          \end{array}
        \right.
      \end{equation}
        where, the first term: $(k_{8}+k_{9}H_{c}^{u}(y_{4}))H_{c}^{d}(y_{5})H_{c}^{d}(y_{7})y_{3}$ designates the interaction between $y_{4}(\cdot)$ (up-regulation) and $y_{7}(\cdot)$ (down-regulation) affecting the $y_{5}(\cdot)$ production (at rate $k_{9}$) which also induces auto-negative feedback. The term: $-k_{3}(y_{5}-q_{5})$ indicates the natural decay (at rate $k_{3}$) of $y_{5}(\cdot)$ toward resting level: $q_{5}>0$.\\

        The time-dependent equation modeled by interleukin 8 ($y_{6}(\cdot)$) expressions is defined as
      \begin{equation}\label{7}
        \left\{
          \begin{array}{ll}
            \frac{dy_{6}}{dt}=(k_{10}+k_{11}H_{c}^{u}(y_{4}))H_{c}^{d}(y_{7})y_{3}-k_{4}(y_{6}-q_{6}), & \hbox{on $\Omega\times[0,\text{\,}T]$} \\
            \text{\,}\\
            y_{6}(0)=\overline{y}_{6}:=0. & \hbox{}
          \end{array}
        \right.
      \end{equation}
        Here, the term: $(k_{10}+k_{11}H_{c}^{u}(y_{4}))H_{c}^{d}(y_{7})y_{3}$ denotes the interaction between the opposing effects of $y_{4}(\cdot)$ (up-regulation) at rate $k_{11}$ and $y_{7}(\cdot)$ (down-regulation) at rate $k_{10}$, simulating growth of $y_{6}(\cdot)$ at a rate proportional to the average concentration of $y_{3}(\cdot)$ production whereas the second term: $-k_{4}(y_{6}-q_{6})$ represents the natural decay of $y_{6}(\cdot)$ at rate $k_{4}$, toward resting level: $q_{6}>0$.\\

        Finally, the ODE describing the dynamic of interleukin 10 ($y_{7}(\cdot)$) concentrations is given by
      \begin{equation}\label{8}
        \left\{
          \begin{array}{ll}
            \frac{dy_{7}}{dt}=(k_{12}+k_{13}H_{c}^{u}(y_{5}))y_{3}-k_{5}(y_{7}-q_{7}), & \hbox{on $\Omega\times[0,\text{\,}T]$} \\
            \text{\,}\\
            y_{7}(0)=\overline{y}_{7}:=0. & \hbox{}
          \end{array}
        \right.
      \end{equation}
       In this equation, the first term: $(k_{12}+k_{13}H_{c}^{u}(y_{5}))y_{3}$ is the up-regulation of $y_{7}(\cdot)$ due to $y_{5}(\cdot)$ (at rate $k_{13}$) and average concentration of $y_{3}(\cdot)$ (at rate $k_{12}$). The second term: $-k_{5}(y_{7}-q_{7})$ denotes the natural decay of $y_{7}(\cdot)$ at rate $k_{5}$. $q_{7}>0$ is the resting level.\\

       Moreover, the authors \cite{40tgq,36tgq} established that the migration of both S. aureus and macrophages should be observed as diffusion. The transfer rate of the cells from one side to another one is represented by the diffusion coefficients and it is proportional to the concentration gradient of the particles (cells). For the sake of convenience, the medium is assumed to be isotropic and it has the same diffusion coefficient in each direction \cite{36tgq}. That is, the diffusion term is modeled via quantity $D_{r}\Delta y_{s}$, where $r\in\{MA, MR\}$ and $s\in\{A,R\}$. $D_{r}$ denotes the diffusion coefficient of the macrophages in the tissue, while $\Delta$ refers to the Laplacian operator. Following the works discussed in \cite{10tgq,25tgq,33tgq} on the cell-based model, the average numbers of $y_{2}(\cdot)(t)$ and $y_{3}(\cdot)(t)$ are computed by integrating the resulting concentrations of each one over the domain $\Omega$. That is,
       \begin{equation}\label{9}
        \overline{y}_{l}(t)=\frac{1}{|\Omega|}\int_{\Omega}y_{l}(x,t)dx,
       \end{equation}
       where $l=2,3$, and $|\Omega|$ denotes the volume of $\Omega$.

       \subsection{Description of parameters}
       This subsection deals with the parameters included in the mixed cellular-cytokine model and described by the system of nonlinear ODEs-PDEs $(\ref{1})$-$(\ref{8})$. Since the human immune response modeling by these coupled equations seems to be too complex, for the sake simplicity, we assume in this paper that the cellular parameters, cytokine half-maximum value and hill function exponent parameters are constants.
       \begin{equation*}
       \begin{tabular}{|c|c|}
         \hline
         $D_{1}:$  bacteria diffusion coefficient & $D_{2}:$ RM diffusion coefficient  \\
         $D_{3}:$  AM diffusion coefficient & $\beta_{1}:$ replication rate of the bacteria \\
         $\lambda_{12}:$ destruction rate of opsonized bacteria by RM & $\mu_{1}:$ natural decay rate of the bacteria \\
         $\mu_{2}:$ natural decay rate of RM & $\mu_{3}:$ natural decay rate of AM\\
         $\gamma_{3}:$ rate of active RM & $\lambda_{2}:$ activation rate of macrophages\\
         $\lambda_{3}:$ destruction rate of bacteria by AM & $k_{1}:$ carrying capacity of the bacteria\\
         $\lambda_{13}:$ destruction rate of opsonized bacteria by AM & \\
         \hline
       \end{tabular}
       \end{equation*}
       where AM=$y_{3}(\cdot)$:= activated macrophages, RM=$y_{2}(\cdot)$:= resting macrophages and TNF$\alpha$=$y_{4}(\cdot)$:= tumor necrosis factor alpha. The above parameters determine the different rates of decay or growth for the bacteria and cells.
       \begin{equation*}
       \begin{tabular}{|c|c|}
         \hline
         $\alpha_{1}:$ migration rate of RM to site of infection & $k_{3}:$ activation rate of interleukin 6 (per hrs)  \\
         $\alpha_{2}:$ migration rate of AM to site of infection & $k_{4}:$ activation rate of interleukin 8 (per hrs) \\
         $k_{2}:$ activation rate of TNF$\alpha$ (per hrs) & $k_{5}:$ activation rate of interleukin 10 (per hrs) \\
         $k_{6}:$ activation rate of RM influenced by $y_{4}(\cdot)$ & $k_{7}:$ upregulation of $y_{4}(\cdot)$ by AM \\
         $k_{8}:$ upregulation of $y_{5}(\cdot)$ by AM & $k_{9}:$ upregulation of $y_{5}(\cdot)$ by TNF$\alpha$ \\
         $k_{10}:$ upregulation of $y_{6}(\cdot)$ by AM & $k_{11}:$ upregulation of $y_{6}(\cdot)$ by TNF$\alpha$ \\
         $k_{12}:$ upregulation of $y_{7}(\cdot)$ by AM & $k_{13}:$ upregulation of $y_{7}(\cdot)$ by interleukin 6 \\
         \hline
       \end{tabular}
       \end{equation*}
       We recall that the constants $k_{sl}$ denote the rate of change in the up-regulation rate of the cytokine secreted from activated macrophages while $k_{s}$ represent the activation or elimination of cytokine. The initial values of these parameters are based on the predicted conditions of the model activated with low dose of the bacterium S. aureus.
       \begin{equation*}
       \begin{tabular}{|c|c|}
         \hline
         $q_{4}:$ concentration of TNF$\alpha$ in absence of pathogem & $q_{5}:$ concentration of IL6 in absence of antigen  \\
         $q_{6}:$ concentration of IL8 in absence of bacteria & $q_{7}:$ concentration of IL10 in absence of microbes \\
         \hline
       \end{tabular}
       \end{equation*}
       where IL6=$y_{5}(\cdot)$:= interleukin 6, IL8=$y_{6}(\cdot)$:= interleukin 8 and IL10=$y_{7}(\cdot)$:= interleukin 10. it worth mentioning that the source terms $q_{l}$ are also used to set the initial conditions for each cytokine. The values of these parameters are obtained from the initial predicted conditions of the model in the absence of the bacterium S. aureus. The half-maximum value parameter $\eta_{sl}$ describes the effective cytokine concentration at which targeted cytokine activity should reach half-maximum with units of $pgmL^{-1}$.\\

       \textbf{Table 1}
       \begin{equation*}
       \begin{tabular}{|c|c|}
         \hline
         $\eta_{45}:$ HMV assoc. down-reg of TNF$\alpha$ by IL6 & $H_{y_{7}}^{u}(y_{5}):$ HFE assoc. up-reg of IL10 by IL6 \\
         $\eta_{57}:$ HMV assoc. down-reg of IL6 by IL10 & $H_{y_{5}}(y_{5}):$ HFE assoc. auto-neg feedback of IL6\\
         $\eta_{54}:$ HMV assoc. up-reg of IL6 by TNF$\alpha$ & $H_{y_{6}}^{u}(y_{4}):$ HFE assoc. up-reg of IL8 by TNF$\alpha$\\
         $\eta_{67}:$ HMV assoc. down-reg of IL8 by IL10 &  $H_{y_{4}}^{d}(y_{7}):$ HFE assoc. down-reg of TNF$\alpha$ by IL10\\
         $\eta_{47}:$ HMV assoc. down-reg of TNF$\alpha$ by IL10  & $H_{y_{5}}^{u}(y_{4}):$ HFE assoc. up-reg of IL6 by TNF$\alpha$ \\
         $\eta_{55}:$ HMV assoc. auto-neg feedback of IL6  & $H_{y_{5}}^{d}(y_{7}):$ HFE assoc. down-reg of IL6 by IL10 \\
         $\eta_{64}:$ HMV assoc. up-reg of IL8 by TNF$\alpha$ & $H_{y_{6}}^{d}(y_{7}):$ HFE assoc. down-reg of IL8 by IL10  \\
         $\eta_{75}:$ HMV assoc. up-reg of IL10 by IL6 & $H_{y_{4}}^{d}(y_{5}):$ HFE assoc. down-reg of TNF$\alpha$ by IL6  \\
         \hline
       \end{tabular}
       \end{equation*}
       where HMV:=half-max value, assoc. down-reg:= associated down-regulation, HFE:=hill function exponent, assoc. up-reg:= associated with up-regulation, assoc. auto-neg:=associated with auto-negative.\\

       In the literature \cite{28tgq,36tgq}, the authors discussed the values of the above parameters and the obtained results are provided in the following Table 2.\\

       \textbf{Table 2}
       \begin{equation*}
          \begin{tabular}{|c|c|c|c|c|c|}
         \hline
         parameter & value & unit & parameter & value & unit \\
         \hline
         $D_{1}$ & $3.7\times10^{-15}$ & $mm^{3}/day$ & $\beta_{1}$ & $2.0$ & $day^{-1}$ \\
         $D_{2}$ & $4.32\times10^{-2}$ & $mm^{3}/day$ & $k_{1}$ & $5\times10^{-2}$ & $cell/mm^{3}$\\
         $D_{3}$ & $3\times10^{-1}$ & $mm^{3}/day$ & $\mu_{1}$ & $0.1$ & $day^{-1}$\\
         $\mu_{2}$ & $3.3\times10^{-2}$ & $day^{-1}$ & $q_{4}$ & $0.14$ & relative concentration \\
         $\mu_{3}$ & $7\times10^{-2}$ & $day^{-1}$ & $q_{5}$ & $0.6$ & relative concentration \\
         $\gamma_{3}$ & $8.2\times10^{-2}$ & $mm^{3}/cell.day$ & $q_{6}$ & $0.2$ & relative concentration  \\
         $\lambda_{2}$ & $5.98\times10^{-3}$ & $mm^{3}/cell.day$ & $q_{7}$ & $0.15$ & relative concentration \\
         $\lambda_{3}$ & $5.98\times10^{-2}$ & $mm^{3}/cell.day$ & $\eta_{45}$ & $560$ & relative concentration \\
         $\lambda_{13}$ & $7.14\times10^{-2}$ & $mm^{6}/cell^{2}.day$ & $\eta_{47}$ & $17.4$ & relative concentration \\
         $\lambda_{12}$ & $1.66\times10^{-3}$ & $mm^{6}/cell^{2}.day$ & $\eta_{57}$ & $34.8$ & relative concentration \\
         $\alpha_{1}$ & $4\times10^{0}$ & $day^{-1}$ & $\eta_{55}$ & $560$ & relative concentration \\
         $\alpha_{2}$ & $10^{-3}$ & $day^{-1}$ & $\eta_{54}$ & $185$ & relative concentration \\
         $k_{2}$ & $2\times10^{-1}$ & $day^{-1}$ & $\eta_{64}$ & $185$ & relative concentration \\
         $k_{3}$ & $4.64$ & $day^{-1}$ & $\eta_{67}$ & $17.4$ & relative concentration \\
         $k_{4}$ & $0.464$ & $day^{-1}$ & $\eta_{75}$ & $560$ & relative concentration \\
         $k_{5}$ & $1.1$ & $day^{-1}$ & $H_{y_{4}}^{d}(y_{7})$ & $3.0$ & dimensionless \\
         $k_{6}$ & $8.65$ & $hr^{-1}$ & $H_{y_{4}}^{d}(y_{5})$ & $2.0$ & dimensionless \\
         $k_{7}$ & $1.5$ & $\frac{rel.cyt.conc.}{day\text{\,}of\text{\,}cell}$ & $H_{y_{5}}(y_{5})$ & $1.0$ & dimensionless \\
         $k_{8}$ & $10^{-2}$ & $\frac{rel.cyt.conc.}{day\text{\,}of\text{\,}cell}$ & $H_{y_{7}}^{u}(y_{5})$ & $3.68$ & dimensionless \\
         $k_{9}$ & $8.1\times10^{-1}$ & $\frac{rel.cyt.conc.}{day\text{\,}of\text{\,}cell}$ & $H_{y_{5}}^{u}(y_{4})$ & $2.0$ & dimensionless \\
         $k_{10}$ & $5.6\times10^{-2}$ & $\frac{rel.cyt.conc.}{day\text{\,}of\text{\,}cell}$ & $H_{y_{5}}^{d}(y_{7})$ & $4.0$ & dimensionless \\
         $k_{11}$ & $5.6\times10^{-1}$ & $\frac{rel.cyt.conc.}{day\text{\,}of\text{\,}cell}$ & $H_{y_{6}}^{u}(y_{4})$ & $3.0$ & dimensionless \\
         $k_{12}$ & $1.9\times10^{-1}$ & $\frac{rel.cyt.conc.}{day\text{\,}of\text{\,}cell}$ & $H_{y_{6}}^{d}(y_{7})$ & $1.5$ & dimensionless \\
         $k_{13}$ & $1.91\times10^{-2}$ & $\frac{rel.cyt.conc.}{day\text{\,}of\text{\,}cell}$ &  &  &  \\
         \hline
       \end{tabular}
       \end{equation*}
        where rel. cyt. conc.:=relative cytokine concentration.

        \subsection{Statistical approach}
         A regression model can be used to compare the numerical results and the experimental data. The analysis should consider computed relative concentrations of the pathogen S. aureus $y_{1}(\cdot)$, resting macrophages $y_{2}(\cdot)$, activated macrophages $y_{3}(\cdot)$, tumor necrosis factor alpha $y_{4}(\cdot)$, interleukin 6 $y_{5}(\cdot)$, interleukin 8 $y_{6}(\cdot)$ and interleukin 10 $y_{7}(\cdot)$, together with the experimental concentrations for cellular and cytokines induced by either gram-positive bacteria, gram-negative bacteria, lipoteichoic acid or peptidoglycan. Furthermore, for either cellular or cytokine the regression model should be linear:
        \begin{equation*}
            z=ay+b,
        \end{equation*}
        where $a$ denotes the slope of each linear least square regression and $b$ is the $z$-intercept.

        \section{Development of the second-order explicit predictor-corrector numerical scheme}\label{sec3}
        In this section, we develop a new second-order explicit predictor-corrector numerical technique in a computed solution of the cellular-cytokine model described by the nonlinear unsteady equations $(\ref{1})$-$(\ref{8})$.\\

       Let $N$ be a positive integer and $\sigma:=\Delta t=\frac{T}{N}$, be the step size. Set $t_{k}=k\sigma$,  $t_{k+\frac{1}{2}}=\frac{t_{k}+t_{k+1}}{2}$, for $k=0,1,2,...,N$, and $\mathcal{T}_{\sigma}=\{t_{k},\text{\,\,}k=0,1,...,N\}$, be a regular partition of $[0,\text{\,}T]$. For the convenience of writing, we set $y_{r}(t):=y_{r}(x,t)$, for $r=4,...,7$, $y_{j}(x,t_{k})=y_{j}^{k}(x)$, for $(x,t_{k})\in\Omega\times\mathcal{T}_{\sigma}$, $j=1,2,...,7$, and $y^{k}(x)=[y_{1}^{k}(x),...,y_{7}^{k}(x)]\in\mathbb{R}^{7}$. The space of mesh functions defined on $\Omega\times\mathcal{T}_{\sigma}$ is given by $\mathcal{F}_{\sigma}=\{y^{k}(x),\text{\,\,}0\leq k\leq N,\text{\,}x\in\Omega\}$. We introduce the vector functions $y$ and $F$ defined from $\Omega\times[0,\text{\,}T]$ to $\mathbb{R}^{7}$ and $\mathbb{R}^{7}\times\Omega\times[0,\text{\,}T]$ to $\mathbb{R}^{7}$, respectively, by
       \begin{equation}\label{10}
        y(x,t)=[y_{1}(x,t),...,y_{7}(x,t)] \text{\,\,\,\,\,and\,\,\,\,\,} F(y(x,t))=[F_{1}(y(x,t)),...,F_{7}(y(x,t))],
       \end{equation}
       where
       \begin{equation}\label{12}
        \begin{tabular}{cc}
         $F_{1}(y(x,t)) = \beta_{1}(1-k_{1}^{-1}y_{1})y_{1}-\mu_{1}y_{1}-\lambda_{2}y_{1}y_{2}-\lambda_{3}y_{1}y_{3}$, \\
         \text{\,}\\
         $F_{2}(y(x,t)) = \mu_{2}(1-y_{2m}^{-1}y_{2})y_{2}-[\gamma_{3}+k_{6}H_{c}^{u}(y_{4})H_{c}^{d}(y_{7})]y_{1}y_{2}$, \\
         \text{\,}\\
         $F_{3}(y(x,t)) = [\gamma_{3}+k_{6}H_{c}^{u}(y_{4})H_{c}^{d}(y_{7})]y_{1}y_{2}-\mu_{3}y_{3}$, \\
         \text{\,}\\
         $F_{4}(y(x,t)) = k_{7}H_{c}^{d}(y_{5})H_{c}^{d}(y_{7})y_{3}-k_{2}(y_{4}-q_{4})$, \\
         \text{\,}\\
         $F_{5}(y(x,t)) = [k_{8}+k_{9}H_{c}^{u}(y_{4})]H_{c}^{d}(y_{5})H_{c}^{d}(y_{7})]y_{3}-k_{3}(y_{5}-q_{5})$, \\
         \text{\,}\\
         $F_{6}(y(x,t)) = [k_{10}+k_{11}H_{c}^{u}(y_{4})]H_{c}^{d}(y_{7})y_{3}-k_{4}(y_{6}-q_{6})$, \\
         \text{\,}\\
         $F_{7}(y(x,t)) = [k_{12}+k_{13}H_{c}^{u}(y_{5})]y_{3}-k_{5}(y_{7}-q_{7})$.
        \end{tabular}
       \end{equation}
       Utilizing equations $(\ref{10})$-$(\ref{12})$, it is not hard to observe that the system of nonlinear differential equations $(\ref{1})$-$(\ref{8})$ can be expressed as
       \begin{equation}\label{13}
        \frac{\partial y}{\partial t}(x,t)=F(y(x,t)),
       \end{equation}
       where $\frac{\partial y}{\partial t}=[\frac{\partial y_{1}}{\partial t},...,\frac{\partial y_{7}}{\partial t}]$. Subjects to initial-boundary conditions
       \begin{equation}\label{14}
      y_{j}(x,0)=\overline{y}_{j},\text{\,\,\,for\,\,\,}j=1,...,7,\text{\,\,\,on\,\,\,\,}\overline{\Omega},
      \end{equation}
       \begin{equation}\label{15}
      \frac{\partial y_{1}}{\partial t}(x,t)=0,\text{\,\,\,on\,\,\,\,}\partial\Omega\times[0,\text{\,}T].
      \end{equation}
      The integration of $(\ref{13})$ over the interval $(t_{k},\text{\,}t_{k+\frac{1}{2}})$ provides
      \begin{equation*}
        y(x,t_{k+\frac{1}{2}})=y(x,t_{k})+\int_{t_{k}}^{t_{k+\frac{1}{2}}}F(y(x,t))dt,
      \end{equation*}
      which is equivalent to
      \begin{equation}\label{16}
      y^{k+\frac{1}{2}}(x)=y^{k}(x)+\int_{t_{k}}^{t_{k+\frac{1}{2}}}F(y(x,t))dt.
      \end{equation}
      Let $P_{j}^{(1)}(x,t)$ be the first-order polynomial approximating the functions $F_{j}(y(x,t))$ at the mesh points $(t_{m},F_{j}(y^{m}(x)))$, for $m\in\{k,k+\frac{1}{2}\}$, thus
      \begin{equation}\label{17}
      F_{j}(y(x,t))=P_{j}^{(1)}(x,t)+E_{j}^{(1)}(y(x,t)),
      \end{equation}
      where
      \begin{equation*}
      P_{j}^{(1)}(x,t)=\frac{t-t_{k+\frac{1}{2}}}{t_{k}-t_{k+\frac{1}{2}}}F_{j}(y^{k}(x))+\frac{t-t_{k}}{t_{k+\frac{1}{2}}-t_{k}}F_{j}(y^{k+\frac{1}{2}}(x))=
       \frac{\sigma}{2}[F_{j}(y^{k+\frac{1}{2}}(x))-F_{j}(y^{k}(x))]+
      \end{equation*}
      \begin{equation}\label{18}
      t_{k+\frac{1}{2}}F_{j}(y^{k}(x))-t_{k}F_{j}(y^{k+\frac{1}{2}}(x)),
      \end{equation}
      and the associated error $E_{j}$ is defined as
      \begin{equation}\label{19}
      E_{j}^{(1)}(y(x,t))=\frac{1}{2}(t-t_{k})(t-t_{k+\frac{1}{2}})\frac{\partial^{2}F_{j}}{\partial t^{2}}(y(x,t_{\epsilon}^{1})),
      \end{equation}
      where $t_{\epsilon}^{1}$ is between the minimum and maximum of $t_{k+\frac{1}{2}}$, $t_{k}$ and $t$.\\

      Integrating both sides of equation $(\ref{18})$ over the interval $[t_{k},\text{\,}t_{k+\frac{1}{2}}]$, results in
      \begin{equation}\label{20}
      \int_{t_{k}}^{t_{k+\frac{1}{2}}}P_{j}^{(1)}(x,t)dt=\frac{1}{\sigma}[F_{j}(y^{k+\frac{1}{2}}(x))-F_{j}(y^{k}(x))](t_{k+\frac{1}{2}}^{2}-t_{k}^{2})+
      \frac{2}{\sigma}[t_{k+\frac{1}{2}}F_{j}(y^{k}(x))-t_{k}F_{j}(y^{k+\frac{1}{2}}(x))](t_{k+\frac{1}{2}}-t_{k}).
      \end{equation}
       Since $t_{m}=m\sigma$, so $(t_{k+\frac{1}{2}}^{2}-t_{k}^{2})=(t_{k+\frac{1}{2}}-t_{k})(t_{k+\frac{1}{2}}+t_{k})=(2k+\frac{1}{2})\frac{\sigma^{2}}{2}$ and
       $[t_{k+\frac{1}{2}}F_{j}(y^{k}(x))-t_{k}F_{j}(y^{k+\frac{1}{2}}(x))](t_{k+\frac{1}{2}}-t_{k})=\frac{\sigma^{2}}{2}[(k+\frac{1}{2})F_{j}(y^{k}(x))-
       kF_{j}(y^{k+\frac{1}{2}}(x))]$. These facts along with equation $(\ref{20})$ give
       \begin{equation}\label{21}
      \int_{t_{k}}^{t_{k+\frac{1}{2}}}P_{j}^{(1)}(x,t)dt=\frac{\sigma}{4}[F_{j}(y^{k}(x))+F_{j}(y^{k+\frac{1}{2}}(x))].
      \end{equation}
       In addition, the integration of equation $(\ref{19})$ on the interval $[t_{k},\text{\,}t_{k+\frac{1}{2}}]$, yields
      \begin{equation*}
      \int_{t_{k}}^{t_{k+\frac{1}{2}}}E_{j}^{(1)}(y(x,t))dt=\frac{1}{2}\int_{t_{k}}^{t_{k+\frac{1}{2}}}(t-t_{k})(t-t_{k+\frac{1}{2}})\frac{\partial^{2}F_{j}}{\partial t^{2}}(y(x,t))dt.
      \end{equation*}
       The absolute value in both sides of this equation implies
       \begin{equation}\label{22}
      \left|\int_{t_{k}}^{t_{k+\frac{1}{2}}}E_{j}^{(1)}(y(x,t))dt\right|\leq\frac{1}{2}\int_{t_{k}}^{t_{k+\frac{1}{2}}}|t-t_{k}||t-t_{k+\frac{1}{2}}|
      \left|\frac{\partial^{2}F_{j}}{\partial t^{2}}(y(x,t))\right|dt\leq\frac{\sigma^{3}}{16}\underset{0\leq t\leq T}{\sup}
      \left|\frac{\partial^{2}F_{j}}{\partial t^{2}}(y(x,t))\right|.
      \end{equation}
       But, utilizing equations $(\ref{1})$-$(\ref{8})$, it follows that the functions $y_{j}(\cdot)$ have continuous first-order partial derivatives on the bounded domain $\overline{\Omega}\times[0,\text{\,}T]$. This fact together with equations $(\ref{2})$ and $(\ref{12})$ show that the functions $F_{j}$ have continuous second-order partial derivatives. Thus, $\underset{0\leq t\leq T}{\sup}\left|\frac{\partial^{2}F_{j}}{\partial t^{2}}(y(x,t))\right|\leq C_{0}$, where $C_{0}>0$, is a constant independent of $\sigma$. Using this and estimate $(\ref{22})$, one can write
       \begin{equation}\label{23}
       \int_{t_{k}}^{t_{k+\frac{1}{2}}}E_{j}^{(1)}(y(x,t))dt=O(\sigma^{3}),\text{\,\,\,for\,\,\,\,}j=1,2,...,7.
      \end{equation}
       Integrating equation $(\ref{17})$ on $[t_{k},\text{\,}t_{k+\frac{1}{2}}]$ and utilizing equations $(\ref{21})$ and $(\ref{23})$, to get
       \begin{equation*}
      \int_{t_{k}}^{t_{k+\frac{1}{2}}}F_{j}(y(x,t))dt=\frac{\sigma}{4}[F_{j}(y^{k}(x))+F_{j}(y^{k+\frac{1}{2}}(x))]+O(\sigma^{3}),\text{\,\,\,for\,\,\,\,}j=1,2,...,7.
      \end{equation*}
       Substituting this into equation $(\ref{16})$, to obtain
       \begin{equation}\label{24}
       y^{k+\frac{1}{2}}(x)=y^{k}(x)+\frac{\sigma}{4}[F(y^{k}(x))+F(y^{k+\frac{1}{2}}(x))]+\mathcal{O}(\sigma^{3}),
      \end{equation}
       where $\mathcal{O}(k^{3})=(O(\sigma^{3}),...,O(\sigma^{3}))$. We should approximate the term $F_{j}(y^{k}(x))+F_{j}(y^{k+\frac{1}{2}}(x))$, by the sum $c_{1}F_{j}(y^{k}(x))+c_{2}F_{j}[y^{k}(x)+\sigma pF(y^{k}(x))]$, in which the scalars $c_{1}$, $c_{2}$ and $p$ are chosen so that the following equation holds
       \begin{equation*}
        \frac{2}{\sigma}\left(y^{k+\frac{1}{2}}(x)-y^{k}(x)\right)-\frac{1}{2}[F(y^{k}(x))+F(y^{k+\frac{1}{2}}(x))]=\mathcal{O}(\sigma^{2}).
       \end{equation*}
       Expanding the Taylor series for $y_{j}(x,\cdot)$ and $F_{j}$ about the points $t_{k}$ and $y^{k}(x)$, respectively, with steplength $\frac{\sigma}{2}$ utilizing forward difference formulation, simple computations result in
       \begin{equation*}
        \frac{2}{\sigma}\left(y_{j}^{k+\frac{1}{2}}(x)-y_{j}^{k}(x)\right)-\frac{1}{2}[F_{j}(y^{k}(x))+F_{j}(y^{k+\frac{1}{2}}(x))]=(1-\frac{c_{1}+c_{2}}{2})F_{j}(y^{k}(x))+
        \frac{\sigma}{4}(1-2c_{2}p)\frac{\partial F_{j}}{\partial t}(y^{k}(x))+O(\sigma^{2}).
       \end{equation*}
       The right side of this equations equals $O(\sigma^{2})$, if and only if, $c_{1}+c_{2}=2$ and $c_{2}p=\frac{1}{2}$. For instance, we can take $c_{1}=\frac{3}{2}$, $c_{2}=\frac{1}{2}$ and $p=1$. So,
       \begin{equation*}
        F_{j}(y^{k}(x))+F_{j}(y^{k+\frac{1}{2}}(x))=\frac{3}{2}F_{j}(y^{k}(x))+\frac{1}{2}F_{j}[y^{k}(x)+\sigma F(y^{k}(x))],\text{\,\,\,for\,\,\,}j=1,2,...,7.
       \end{equation*}
       This is equivalent to
       \begin{equation}\label{24a}
        F(y^{k}(x))+F(y^{k+\frac{1}{2}}(x))=\frac{3}{2}F(y^{k}(x))+\frac{1}{2}F[y^{k}(x)+\sigma F(y^{k}(x))].
       \end{equation}
       Plugging equations $(\ref{24})$ and $(\ref{24a})$, we obtain
       \begin{equation*}
       y^{k+\frac{1}{2}}(x)=y^{k}(x)+\frac{\sigma}{8}\left[3F(y^{k}(x))+F\left(y^{k}(x)+\sigma F(y^{k}(x))\right)\right]+\mathcal{O}(\sigma^{3}).
      \end{equation*}
       Truncating the error term $\mathcal{O}(\sigma^{3})$ and replacing the analytical solution $y(x,\cdot)$ with the computed one $Y(x,\cdot)$, this gives
       \begin{equation}\label{25}
       Y^{k+\frac{1}{2}}(x)=Y^{k}(x)+\frac{\sigma}{8}\left[3F(Y^{k}(x))+F\left(Y^{k}(x)+\sigma F(Y^{k}(x))\right)\right].
      \end{equation}
       Equation $(\ref{25})$ represents the predictor phase of the desired algorithm.\\

       In a similar manner, denoting by $P_{j}^{(2)}(x,t)$ be the linear polynomial in $t$ interpolating the function $F_{j}(y(x,t))$ at the mesh points $(t_{k+\frac{1}{2}},F_{j}(y^{k+\frac{1}{2}}(x)))$ and $(t_{k+1},F_{j}(y^{k+1}(x)))$, and $E_{j}^{(2)}(y(x,t))$ be the corresponding error. Replacing $F(y(x,t))$ with $P^{(2)}(x,t)+E^{(2)}(y(x,t))$ into equation $(\ref{13})$ and integration over the interval $[\text{\,}t_{k+\frac{1}{2}},t_{k+1}]$, where $P^{(2)}(x,t)=[P_{1}^{(2)}(x,t),...,P_{7}^{(2)}(x,t)]$ and $E^{(2)}(y(x,t))=[E_{1}^{(2)}(y(x,t)),...,E_{7}^{(2)}(y(x,t))]$, straightforward  computations provide

       \begin{equation}\label{26}
       y^{k+1}(x)=y^{k+\frac{1}{2}}(x)+\frac{\sigma}{4}[F(y^{k+\frac{1}{2}}(x))+F(y^{k+1}(x))]+\mathcal{O}(\sigma^{3}).
      \end{equation}
       By the application of Taylor series, the quantity $F(y^{k+\frac{1}{2}}(x))+F(y^{k+1}(x))$ can be approximated as
       \begin{equation*}
        F_{j}(y^{k+\frac{1}{2}}(x))+F_{j}(y^{k+1}(x))=F_{j}(y^{k+\frac{1}{2}}(x))+F_{j}[y^{k+\frac{1}{2}}(x)+\frac{\sigma}{2}F(y^{k+\frac{1}{2}}(x))],
        \text{\,\,\,for\,\,\,}j=1,2,...,7.
       \end{equation*}
       Thus,
       \begin{equation}\label{27}
       F(y^{k+\frac{1}{2}}(x))+F(y^{k+1}(x))=F(y^{k+\frac{1}{2}}(x))+F[y^{k+\frac{1}{2}}(x)+\frac{\sigma}{2}F(y^{k+\frac{1}{2}}(x))].
       \end{equation}
       Combining equations $(\ref{26})$ and $(\ref{27})$, this yields
       \begin{equation}\label{28}
       y^{k+1}(x)=y^{k+\frac{1}{2}}(x)+\frac{\sigma}{4}\left[F(y^{k+\frac{1}{2}}(x))+F\left(y^{k+\frac{1}{2}}(x)+\frac{\sigma}{2} F(y^{k+\frac{1}{2}}(x))\right)\right]+\mathcal{O}(\sigma^{3}).
      \end{equation}
       Tracking the infinitesimal term $\mathcal{O}(\sigma^{3})$ and replacing the exact solution $y(x,\cdot)$ with the approximate one $Y(x,\cdot)$, to get the corrector step of the proposed approach
       \begin{equation}\label{29}
       Y^{k+1}(x)=Y^{k+\frac{1}{2}}(x)+\frac{\sigma}{4}\left[F(Y^{k+\frac{1}{2}}(x))+F\left(Y^{k+\frac{1}{2}}(x)+\frac{\sigma}{2}F(Y^{k+\frac{1}{2}}(x))\right)\right].
      \end{equation}
       But equation $(\ref{15})$ suggests that $\frac{\partial y_{1}}{\partial t}(x,t)=0$, on $\partial\Omega\times[0,T]$. This indicates that $y_{1}(x,t)$ is constant on $\partial\Omega\times[0,T]$. Since, $y_{1}(x,0)=\overline{y}_{1}$, so $y_{1}^{k}(x)=\overline{y}_{1}$, for $k=0,1,...,N$, $x\in\partial\Omega$. Plugging this equation together with equations $(\ref{14})$, $(\ref{25})$ and $(\ref{29})$, to obtain the developed second-order explicit predictor-corrector numerical technique for solving the mathematical model of the dynamic of coupled cellular-cytokine problem $(\ref{1})$-$(\ref{8})$. That is, for $k=0,1,...,N-1$, and every $x\in\Omega$,
       \begin{equation}\label{s1}
       Y^{k+\frac{1}{2}}(x)=Y^{k}(x)+\frac{\sigma}{8}\left[3F(Y^{k}(x))+F\left(Y^{k}(x)+\sigma F(Y^{k}(x))\right)\right],
      \end{equation}
      \begin{equation}\label{s2}
       Y^{k+1}(x)=Y^{k+\frac{1}{2}}(x)+\frac{\sigma}{4}\left[F(Y^{k+\frac{1}{2}}(x))+F\left(Y^{k+\frac{1}{2}}(x)+\frac{\sigma}{2}F(Y^{k+\frac{1}{2}}(x))\right)\right],
      \end{equation}
      with initial and boundary conditions
       \begin{equation}\label{s3}
       Y_{j}^{0}=\overline{y}_{j},\text{\,\,\,for\,\,\,}j=1,2,...,7,\text{\,\,\,on\,\,\,}\Omega,\text{\,\,\,\,and\,\,\,\,}
        Y^{k}_{1}(x)=\overline{y}_{1},\text{\,\,\,for\,\,\,}k=0,1,2,...,N,\text{\,\,\,}x\in\partial\Omega.
       \end{equation}
       We remind that $Y=[Y_{1},...,Y_{7}]$ and $F=[F_{1},...,F_{7}]$.

       \begin{theorem}\label{t}
       The numerical approach $(\ref{s1})$-$(\ref{s2})$ is zero-stable and second-order accurate for any values of the initial-boundary conditions given by $(\ref{s3})$.
       \end{theorem}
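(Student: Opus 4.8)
The plan is to read \pref{s1}--\pref{s2} as a genuine one-step method for the ODE system \pref{13} parametrised by $x\in\Omega$: the maps $F_j$ in \pref{12} contain no spatial derivatives, so the scheme decouples over $x$ and every estimate may be obtained pointwise in $x$ and then taken in the supremum norm over $\overline{\Omega}$. Substituting the predictor \pref{s1} into the corrector \pref{s2} gives an explicit update $Y^{k+1}(x)=Y^{k}(x)+\sigma\,\Phi\bigl(Y^{k}(x);\sigma\bigr)$ with a concrete increment function $\Phi$ built from $F$ by finitely many maps of the form $z\mapsto z+\sigma F(z)$ and constant scalings. The whole argument then rests on two ingredients: a uniform Lipschitz bound for $\Phi$ (this gives zero-stability) and a uniform local truncation error $\mathcal{O}(\sigma^{3})$ (this gives second-order accuracy), after which a discrete Gronwall inequality closes both statements. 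First I would note that the sigmoidal maps $H_c^{u}(y)=y/(\eta_{xy}+y)$ and $H_c^{d}(y)=\eta_{xy}/(\eta_{xy}+y)$ from \pref{2} are $C^{\infty}$ with derivatives bounded uniformly for $y\ge0$ and $\eta_{xy}>0$; combined with the polynomial terms in \pref{12} and the a priori fact that the (nonnegative) components $y_j$ and---for $\sigma$ small---the iterates $Y_j^{k}$ stay in a fixed bounded box $\mathcal{B}\subset\mathbb{R}^{7}$, $F$ is globally Lipschitz on $\mathcal{B}$ with a constant $L$ independent of $\sigma$ and $x$. A routine estimate then yields $\mathrm{Lip}(\Phi)\le L_{\Phi}:=L\bigl(1+c_1\sigma L+c_2\sigma^{2}L^{2}+\cdots\bigr)\le L(1+C\sigma)$ on any bounded range of $\sigma$.

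For zero-stability, observe that the first characteristic polynomial of any one-step method is $\rho(\zeta)=\zeta-1$, whose only root is the simple root $\zeta=1$ on the unit circle, so the root condition holds automatically; what must be supplied is the quantitative statement that perturbations of the data \pref{s3} do not amplify as $\sigma\to0$ with $N\sigma=T$ fixed. If $Y^{k}$ and $\widehat{Y}^{k}$ are the numerical solutions produced from data $\overline{y}_j$ and $\widehat{\overline{y}}_j$, then $\|Y^{k+1}-\widehat{Y}^{k+1}\|_{\infty}\le(1+\sigma L_{\Phi})\|Y^{k}-\widehat{Y}^{k}\|_{\infty}$ for every $k$; iterating and using $(1+\sigma L_{\Phi})^{N}\le e^{L_{\Phi}T}$ gives $\|Y^{k}-\widehat{Y}^{k}\|_{\infty}\le e^{L_{\Phi}T}\,\|\overline{y}-\widehat{\overline{y}}\|_{\infty}$ for all $k\le N$, with a constant independent of $\sigma$. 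This is precisely zero-stability, and it holds for arbitrary admissible data.

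For second-order accuracy I would assemble the expansions already obtained in Section \ref{sec3}. Equation \pref{24} together with the Taylor-matching identity \pref{24a}---where the conditions $c_1+c_2=2$ and $c_2p=\tfrac12$ annihilate the $\mathcal{O}(1)$ and $\mathcal{O}(\sigma)$ parts of the residual---shows that the exact solution satisfies the predictor relation \pref{s1} up to a remainder $\mathcal{O}(\sigma^{3})$; likewise \pref{26}--\pref{28} show it satisfies the corrector relation \pref{s2} up to $\mathcal{O}(\sigma^{3})$. Uniformity of these remainders uses $\sup_{[0,T]}|\partial_t^{2}F_j(y(x,t))|\le C_0$ (the bound recorded before \pref{23}, valid because $y_j\in C^{1}$ and $F_j\in C^{2}$) and an analogous bound on the third-order terms, all uniform in $x\in\overline{\Omega}$. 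Hence the local truncation error of the composed step $y^{k}\mapsto y^{k+1}$ is $\tau^{k}(x)=\mathcal{O}(\sigma^{3})$, i.e. the method is second-order accurate. Writing $e^{k}:=Y^{k}-y(\cdot,t_k)$, subtracting the exact relation from \pref{s1}--\pref{s2} and using $\mathrm{Lip}(\Phi)\le L_{\Phi}$ gives $\|e^{k+1}\|_{\infty}\le(1+\sigma L_{\Phi})\|e^{k}\|_{\infty}+C\sigma^{3}$; since the data \pref{s3} are exact, $e^{0}=0$, and the discrete Gronwall inequality yields $\|e^{k}\|_{\infty}\le e^{L_{\Phi}T}\,CT\,\sigma^{2}=\mathcal{O}(\sigma^{2})$ uniformly over $x$ and over all admissible initial-boundary values.

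The main obstacle is the a priori boundedness assumed above: one must show the iterates $Y^{k}(x)$ remain in a fixed box $\mathcal{B}$ for every $k\le N$ and every sufficiently small $\sigma$, for otherwise the "global" Lipschitz constant $L$ (hence $L_{\Phi}$ and $C_0$) is not uniform and the Gronwall arguments collapse. I would obtain this by an invariant-region/bootstrap argument: the exact solution is bounded on $[0,T]$ (the logistic terms bound $y_1,y_2$, linear decay plus a bounded source bounds $y_3$, and then the sigmoidal sources with linear decay bound $y_4,\dots,y_7$), and a one-step induction shows $\|e^{k}\|_{\infty}$ stays small once $\sigma$ is small, which keeps $Y^{k}$ in a slightly enlarged box; the globally bounded sigmoidal nonlinearities \pref{2} are the favourable structural feature here. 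Once that a priori bound is in hand, the two Gronwall estimates are entirely routine.
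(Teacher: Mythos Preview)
Your proposal is correct and considerably more thorough than the paper's own argument. The paper proceeds in essentially two lines: it combines \pref{s1} and \pref{s2} into the single relation $Y^{k+1}-Y^{k}=(\cdots)$, reads off the first characteristic polynomial \emph{in the half-step variable} $\lambda^{1/2}$, namely $P_{2}(\lambda^{1/2})=(\lambda^{1/2})^{2}-1$, observes that its simple roots $\pm 1$ lie on the closed unit disc, and invokes Dahlquist's root condition for zero-stability; for second-order accuracy it merely points to the $\mathcal{O}(\sigma^{3})$ local truncation errors already recorded in \pref{24} and \pref{28} and defers the passage to global error to an external reference. You instead compose predictor and corrector into a genuine one-step map $Y^{k+1}=Y^{k}+\sigma\,\Phi(Y^{k};\sigma)$, so that the first characteristic polynomial is simply $\zeta-1$ and the root condition is automatic, and then supply the full Lipschitz/Gronwall machinery both for the quantitative perturbation bound (zero-stability) and for the $\mathcal{O}(\sigma^{2})$ global error (convergence). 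Your route is self-contained and makes explicit the dependence on a priori boundedness of the iterates---a point the paper does not address at all---whereas the paper's route is much shorter but leans on the Dahlquist root-condition framework and an outside citation. Both are valid; yours is the more complete argument.
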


       \begin{proof}
        A combination of approximations $(\ref{s1})$ and$(\ref{s2})$ results in
        \begin{equation*}
       Y^{k+1}(x)-Y^{k}(x)=\frac{\sigma}{8}\left[3F(Y^{k}(x))+F\left(Y^{k}(x)+\sigma F(Y^{k}(x))\right)\right]+
      \end{equation*}
       \begin{equation*}
       \frac{\sigma}{4}\left[F(Y^{k+\frac{1}{2}}(x))+F\left(Y^{k+\frac{1}{2}}(x)+\frac{\sigma}{2}F(Y^{k+\frac{1}{2}}(x))\right)\right].
      \end{equation*}
        It follows from this equation that the first characteristic polynomial in term of $\lambda^{\frac{1}{2}}$ of the developed explicit predictor-corrector numerical scheme is defined as
       \begin{equation*}
        P_{2}(\lambda^{\frac{1}{2}})=(\lambda^{\frac{1}{2}})^{2}-1=(\lambda^{\frac{1}{2}}-1)(\lambda^{\frac{1}{1}}+1).
       \end{equation*}
       It is not difficult to see that the roots of this polynomial are: $\lambda_{1}^{\frac{1}{2}}=1$ and $\lambda_{2}^{\frac{1}{2}}=-1$. Since the two roots are simple and lie in the closed unit disc, it comes from the definition of zero-stability \cite{dahl1956,dahl1963} that the proposed scheme $(\ref{s1})$-$(\ref{s3})$ is zero-stable. Furthermore, since the truncation errors in both approximations $(\ref{25})$ and $(\ref{29})$ equal $\mathcal{O}(\sigma^{3})$, the proof of the second-order accuracy for the constructed technique $(\ref{s1})$-$(\ref{s3})$ is similar to that established in \cite{12en}.
       \end{proof}

         \section{Numerical experiments and Discussions}\label{sec4}
         This section presents some numerical simulations to demonstrate the utility and effectiveness of the new second-order explicit predictor-corrector scheme $(\ref{s1})$-$(\ref{s3})$ in approximate solutions of the mathematical model on the dynamic of cytokine levels and human immune cell activation in response to the pathogen S. aureus $(\ref{1})$-$(\ref{8})$. Since the proposed technique is an explicit predictor-corrector approach and second-order accuracy, it is faster and more efficient than a wide set of statistical and numerical methods discussed in the literature for solving general systems of mixed ODEs/PDEs modeling real-world problems \cite{17krbf,18krbf,20krbf} and references therein. Some data used in the simulations are taken from \cite{tgq}. Both tables and graphs (Tables $3$-$4$ and Figures $\ref{fig1}$-$\ref{fig2}$) suggest that the approximate solutions (macrophages and cytokine expressions) increase until a maximum concentration, so called "peaks", after approximately $t$ hours, and become constant when the antigens are eliminated by the immune response. The activated macrophages behave as the pathogens presenting cells and they move to the nearest lymph node where the specific response is triggered. The peaks of TNF$\alpha$, IL6, IL8 and IL10 (see Tables $3$-$4$), agree with the model results (Figure $\ref{fig2}$). Figure $\ref{fig1}$ considers the relationship between S. aureus $(Y_{1}(\cdot))$, resting macrophages $(Y_{2}(\cdot))$ and activated macrophages $(Y_{3}(\cdot))$ whereas Figures $\ref{fig2}$ indicates how the changes in activated macrophages and resting macrophages impact TNF$\alpha$ $(Y_{4}(\cdot))$, IL6 $(Y_{5}(\cdot))$, IL8 $(Y_{6}(\cdot))$ and IL10 $(Y_{7}(\cdot))$. Additionally, the simulated results do not include some factors such as: humoral immune response effects, neutrophil flux or complement response. The initial concentrations of the pathogens, macrophages and cytokines are chosen to match well those reported in the ex vivo studies \cite{12tgq,35tgq}. Furthermore, the numerical analysis shows that the constructed predictor-corrector explicit method $(\ref{s1})$-$(\ref{s3})$ accurately predicts the dynamic of human immune cell activation and cytokine levels in response to S. aureus. Thus, the new algorithm can be considered as a robust tool to predict ex vivo and in vivo experimental data induced by a given antigen (Tables $3$-$4$). The graphs (Figures $\ref{fig1}$-$\ref{fig2}$) also indicate that the body response varies based on the damage inflicted by the bacteria S. aureus. Because of the activation of cytokines in the human immune response, the numerical experiments are performed assuming low initial concentration of gram-positive bacteria S. aureus as discussed in \cite{tgq}. However, high expressions of the pathogen S. aureus due to the tissue damage associated with endotoxicity of gram-positive bacteria provoke a fast increase in cellular and cytokine responses (see Figure $\ref{fig3}$). This case is not analyzed in this work and should be considered as the topic of our future investigations. Furthermore, high concentrations of the antigens are not sufficiently discussed in the literature because of the lack of data to validate the increased expressions \cite{tgq}.\\
         \text{\,}\\
         \text{\,}\\

         \textbf{Table 3.} Concentrations of S. aureus, macrophages and cytokines (in cells$/mm^{3}$) at point $x=(2\times10^{-3},\text{\,}10^{-3},\text{\,}2\times10^{-3})$.
         \begin{equation*}
         \small{\begin{tabular}{|c|c|c|c|c|c|c|c|}
           \hline
           time (hrs) & S. aureus  & rest. macroph & act. macroph & TNF$\alpha$ & IL6 & IL8 & IL10 \\
            \hline
           0 & $2.0\times10^{-1}$  & $10^{-2}$ & $5.0\times10^{-3}$ & $0.0\times10^{0}$ & $0.0\times10^{0}$ & $0.0\times10^{0}$ & $0.0\times10^{0}$ \\
           1 & $2.22\times10^{-2}$ & $1.29\times10^{-2}$ & $4.80\times10^{-3}$ & $2.47\times10^{-2}$ & $5.93\times10^{-1}$ & $7.20\times10^{-2}$ & $9.84\times10^{-2}$ \\
           3 & $1.91\times10^{-2}$ & $2.18\times10^{-2}$ & $4.40\times10^{-3}$ & $6.33\times10^{-2}$ & $6.00\times10^{-1}$ & $1.48\times10^{-1}$ & $1.45\times10^{-1}$ \\
           6 & $1.90\times10^{-2}$ & $4.19\times10^{-2}$ & $4.20\times10^{-3}$ & $9.88\times10^{-2}$ & $6.00\times10^{-1}$ & $1.88\times10^{-1}$ & $1.505\times10^{-1}$ \\
           9 & $1.90\times10^{-2}$ & $6.42\times10^{-2}$ & $4.40\times10^{-3}$ & $1.18\times10^{-1}$ & $6.00\times10^{-1}$ & $1.97\times10^{-1}$ & $1.508\times10^{-1}$ \\
           12& $1.90\times10^{-2}$ & $8.09\times10^{-2}$ & $5.00\times10^{-3}$ & $1.29\times10^{-1}$ & $6.00\times10^{-1}$ & $1.97\times10^{-1}$ & $1.509\times10^{-1}$ \\
           15& $1.90\times10^{-2}$ & $8.98\times10^{-2}$ & $5.70\times10^{-3}$ & $1.35\times10^{-1}$ & $6.00\times10^{-1}$ & $2.00\times10^{-1}$ & $1.510\times10^{-1}$ \\
           18& $1.90\times10^{-2}$ & $9.39\times10^{-2}$ & $6.40\times10^{-3}$ & $1.38\times10^{-1}$ & $6.00\times10^{-1}$ & $2.004\times10^{-1}$ & $1.511\times10^{-1}$ \\
           21& $1.90\times10^{-2}$ & $9.55\times10^{-2}$ & $7.00\times10^{-3}$ & $1.40\times10^{-1}$ & $6.00\times10^{-1}$ & $2.005\times10^{-1}$ & $1.512\times10^{-1}$ \\
           24& $1.90\times10^{-2}$ & $9.61\times10^{-2}$ & $7.50\times10^{-3}$ & $1.41\times10^{-1}$ & $6.00\times10^{-1}$ & $2.005\times10^{-1}$ & $1.513\times10^{-1}$ \\
           peak & --- & $9.61\times10^{-2}$ & $7.50\times10^{-3}$ & $1.411\times10^{-1}$ & $6.00\times10^{-1}$ & $2.005\times10^{-1}$ & $1.513\times10^{-1}$ \\
           \hline
         \end{tabular}}
         \end{equation*}
         \text{\,}\\
         \text{\,}\\
         \textbf{Table 4.} Concentrations of S. aureus, macrophages and cytokines (in cells$/mm^{3}$) at point $x=(2\times10^{-3},\text{\,}3\times10^{-3},\text{\,}6\times10^{-3})$
         \begin{equation*}
         \small{\begin{tabular}{|c|c|c|c|c|c|c|c|}
           \hline
           time (hrs) & S. aureus  & rest. macroph & act. macroph & TNF$\alpha$ & IL6 & IL8 & IL10 \\
            \hline
           0 & $4.68\times10^{-1}$  & $2.33\times10^{-2}$ & $1.17\times10^{-2}$ & $0.0\times10^{0}$ & $0.0\times10^{0}$ & $0.0\times10^{0}$ & $0.0\times10^{0}$ \\
           1 & $2.24\times10^{-2}$ & $2.87\times10^{-2}$ & $1.13\times10^{-2}$ & $2.51\times10^{-2}$ & $5.93\times10^{-1}$ & $7.22\times10^{-2}$ & $9.92\times10^{-2}$ \\
           3 & $1.91\times10^{-2}$ & $4.29\times10^{-2}$ & $1.03\times10^{-2}$ & $6.43\times10^{-2}$ & $6.00\times10^{-1}$ & $1.50\times10^{-1}$ & $1.46\times10^{-1}$ \\
           6 & $1.90\times10^{-2}$ & $6.54\times10^{-2}$ & $9.40\times10^{-3}$ & $1.00\times10^{-1}$ & $6.00\times10^{-1}$ & $1.88\times10^{-1}$ & $1.515\times10^{-1}$ \\
           9 & $1.90\times10^{-2}$ & $8.15\times10^{-2}$ & $9.10\times10^{-3}$ & $1.20\times10^{-1}$ & $6.00\times10^{-1}$ & $1.98\times10^{-1}$ & $1.516\times10^{-1}$ \\
           12& $1.90\times10^{-2}$ & $9.02\times10^{-2}$ & $9.00\times10^{-3}$ & $1.30\times10^{-1}$ & $6.00\times10^{-1}$ & $2.00\times10^{-1}$ & $1.516\times10^{-1}$ \\
           15& $1.90\times10^{-2}$ & $9.40\times10^{-2}$ & $9.10\times10^{-3}$ & $1.36\times10^{-1}$ & $6.00\times10^{-1}$ & $2.01\times10^{-1}$ & $1.516\times10^{-1}$ \\
           18& $1.90\times10^{-2}$ & $9.55\times10^{-2}$ & $9.20\times10^{-3}$ & $1.39\times10^{-1}$ & $6.00\times10^{-1}$ & $2.01\times10^{-1}$ & $1.516\times10^{-1}$ \\
           21& $1.90\times10^{-2}$ & $9.62\times10^{-2}$ & $9.30\times10^{-3}$ & $1.41\times10^{-1}$ & $6.00\times10^{-1}$ & $2.01\times10^{-1}$ & $1.516\times10^{-1}$ \\
           24& $1.90\times10^{-2}$ & $9.64\times10^{-2}$ & $9.40\times10^{-3}$ & $1.42\times10^{-1}$ & $6.00\times10^{-1}$ & $2.01\times10^{-1}$ & $1.516\times10^{-1}$ \\
           peak & --- & $9.64\times10^{-2}$ & $1.17\times10^{-2}$ & $1.419\times10^{-1}$ & $6.00\times10^{-1}$ & $2.01\times10^{-1}$ & $1.516\times10^{-1}$ \\
           \hline
         \end{tabular}}
         \end{equation*}

         \section{General conclusion and future works}\label{sec5}
         In this paper, we have developed a second-order explicit predictor-corrector numerical approach in simulated results of a coupled cellular-cytokine model defined by the system of nonlinear equations $(\ref{1})$-$(\ref{8})$. The theory has suggested that the proposed numerical technique is second-order convergent and stable for any value of the initial datum (Theorem $\ref{t}$). Additionally, the study indicates that the new algorithm is faster and more efficient than a broad range of statistical techniques and numerical schemes discussed in the literature for solving such systems of nonlinear differential equations \cite{7en,18krbf,17krbf,20krbf,29tgq}. Furthermore, the graphs show that the cellular model predicts the relationship between the S. aureus bacteria ($Y_{1}$), resting macrophages ($Y_{2}$) and activated ones ($Y_{3}$) whereas the cytokine model analyzes how the changes in the resting and activated macrophages influence the propagation of tumor necrosis factor alpha ($Y_{4}$), interleukin 6 ($Y_{5}$), interleukin 8 ($Y_{6}$) and interleukin 10 ($Y_{7}$). The developed numerical approach is used to investigating and predicting the dynamic of cytokine levels and human immune cell activation in response to the pathogen S. aureus. This suggests that the proposed technique $(\ref{s1})$-$(\ref{s3})$ can be observed as a robust tool to predicting in vivo and ex vivo experimental data induced by a specific bacterium (see Tables $3$-$4$). The considered bacteria neutrophil interactions are human-specific and may impact the predicted cytokine average expressions. More accurate simulated results should be obtained by incorporating the effect of neutrophils in the ODE modeling the interleukin 8. Finally, the model defined by the initial-boundary value problem $(\ref{1})$-$(\ref{8})$ does not incorporate relevant complement proteins. Complement response of human immune system to gram-positive bacterium (for instance, S. aureus) is fundamental in the activation of chemoattractants for phagocytosis of the antigens. Specifically, cytokine and complement responses have overlapping biological effects on the human body under septic conditions. However, solve a coupling of ordinary and partial differential equations by fast and efficient numerical approaches provides a more realistic representation of the complex relationships within the immune system and may serve as a good test of drugs in silico. Our future works will develop a third-order explicit numerical technique to predicting the dynamic of mixed cellular-cytokine model with complement proteins.
       \text{\,}\\
        \subsection*{Ethical Approval}
     Not applicable.
     \subsection*{Availability of supporting data}
     Not applicable.
     \subsection*{Declaration of Interest Statement}
     The authors declare that they have no conflict of interests.
     \subsection*{Funding}
     Not applicable.
     \subsection*{Authors' contributions}
       The whole work has been carried out by the three authors.

            \newpage

         \begin{figure}
         \begin{center}
          Approximate solutions at different spatial points $x$.
          \begin{tabular}{c c}
         \psfig{file=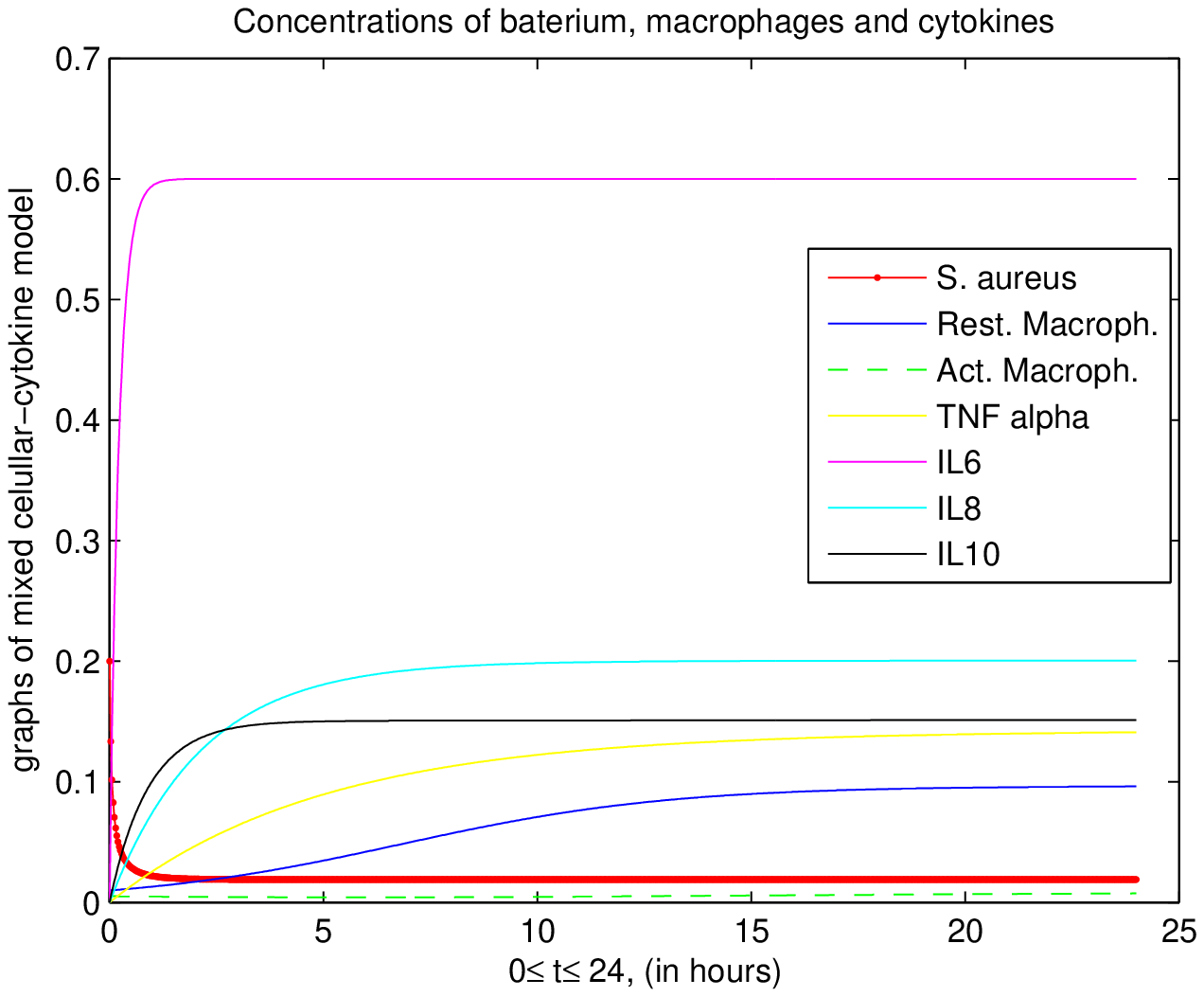,width=6cm}  & \psfig{file=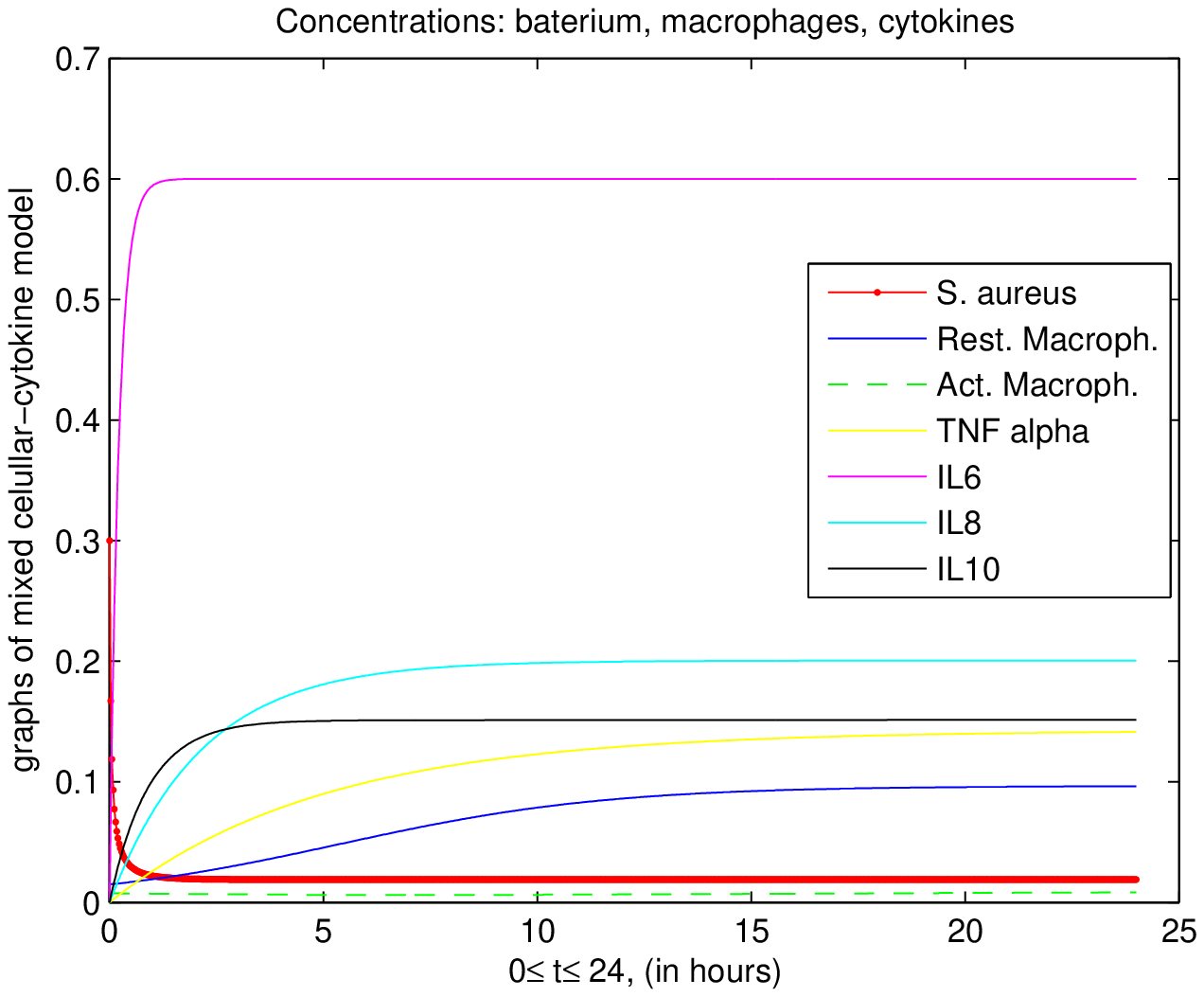,width=6cm}\\
         $x=(2\times10^{-3},\text{\,}10^{-3},\text{\,}2\times10^{-3})$  & $x=(3\times10^{-3},\text{\,}1.5\times10^{-3},\text{\,}3\times10^{-3})$\\
         \text{\,}\\
         \psfig{file=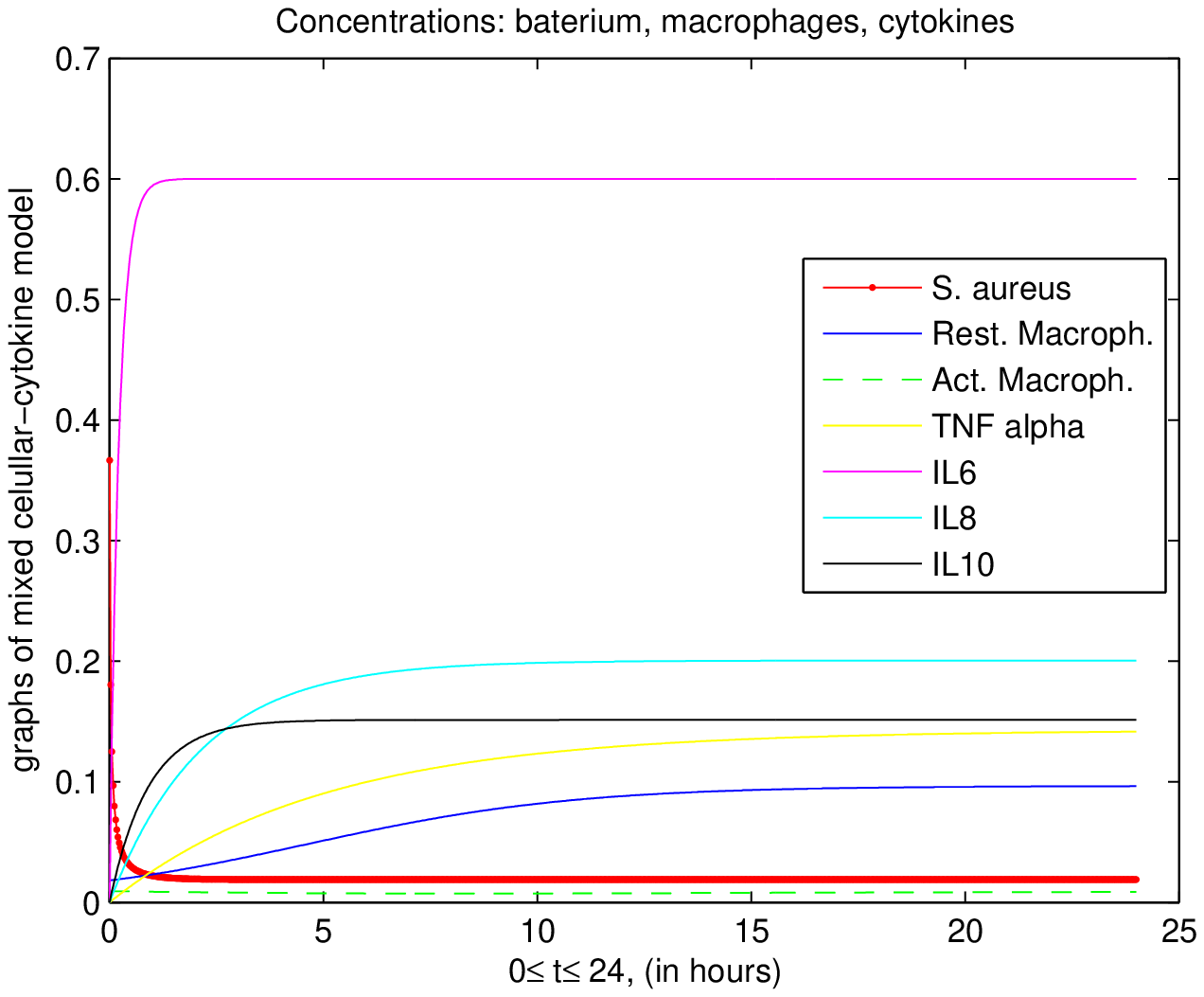,width=6cm}  & \psfig{file=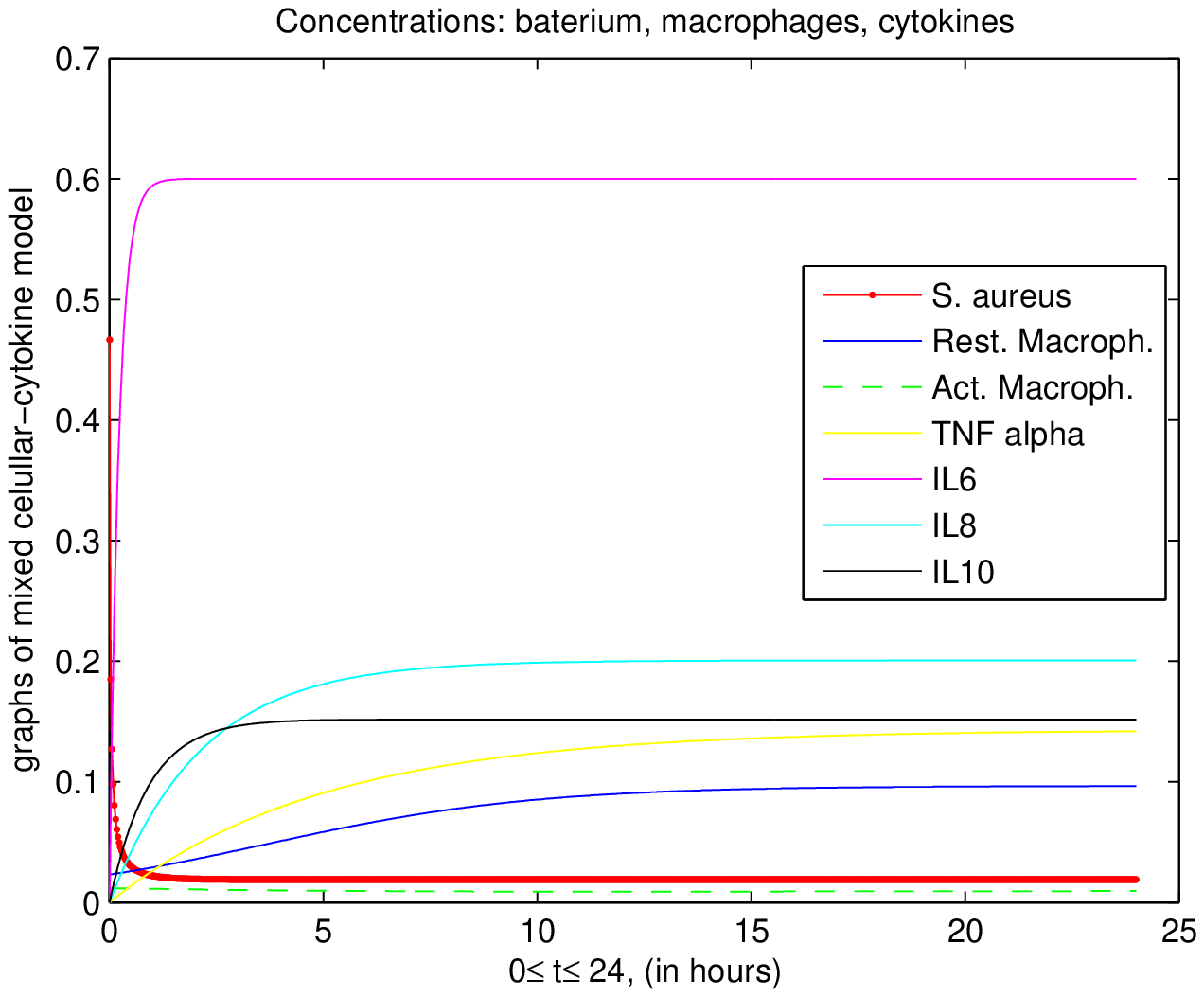,width=6cm}\\
         $x=(2\times10^{-2},\text{\,}5\times10^{-3},\text{\,}10^{-3})$  & $x=(2\times10^{-3},\text{\,}3\times10^{-3},\text{\,}6\times10^{-3})$\\
         \end{tabular}
        \end{center}
         \caption{ Concentrations: S. aureus, resting macroph., activated macroph., TNF alpha, IL6, IL8 and IL10}
        \label{fig1}
          \end{figure}

         \begin{figure}
         \begin{center}
         Simulated results at point $x=(2\times10^{-3},\text{\,}10^{-3},\text{\,}2\times10^{-3})$
          \begin{tabular}{c c}
         \psfig{file=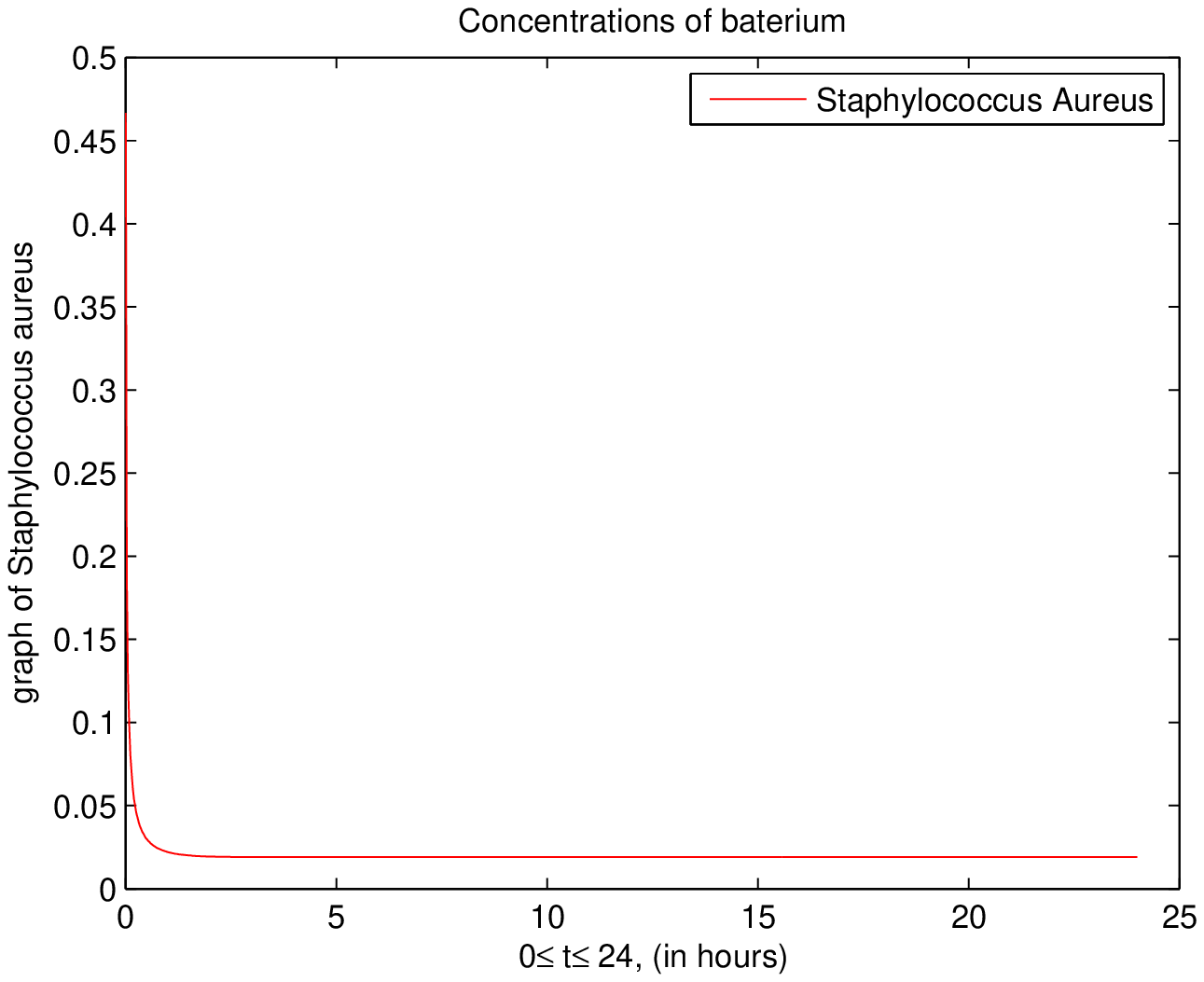,width=6cm}  & \psfig{file=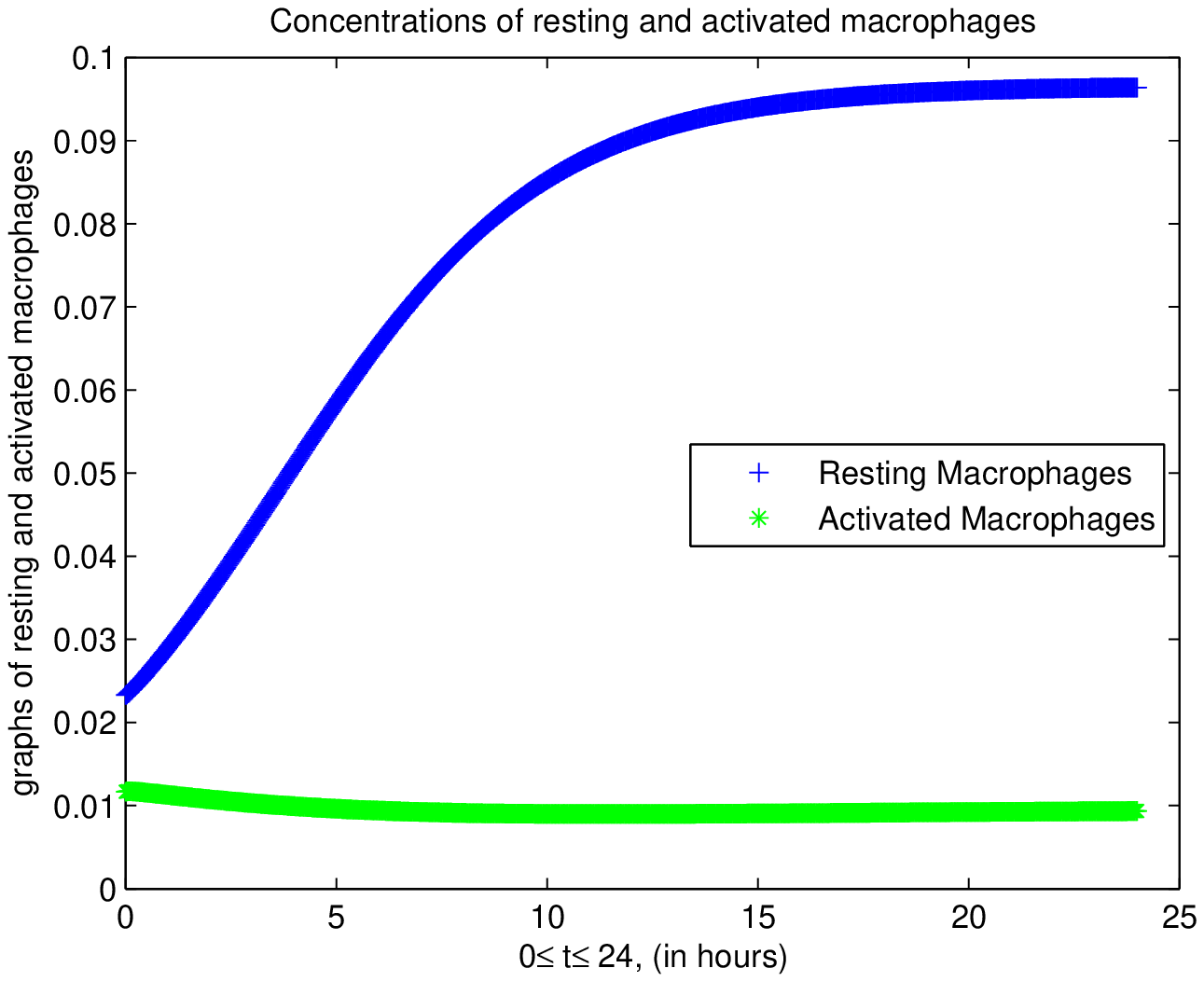,width=6cm}\\
         \psfig{file=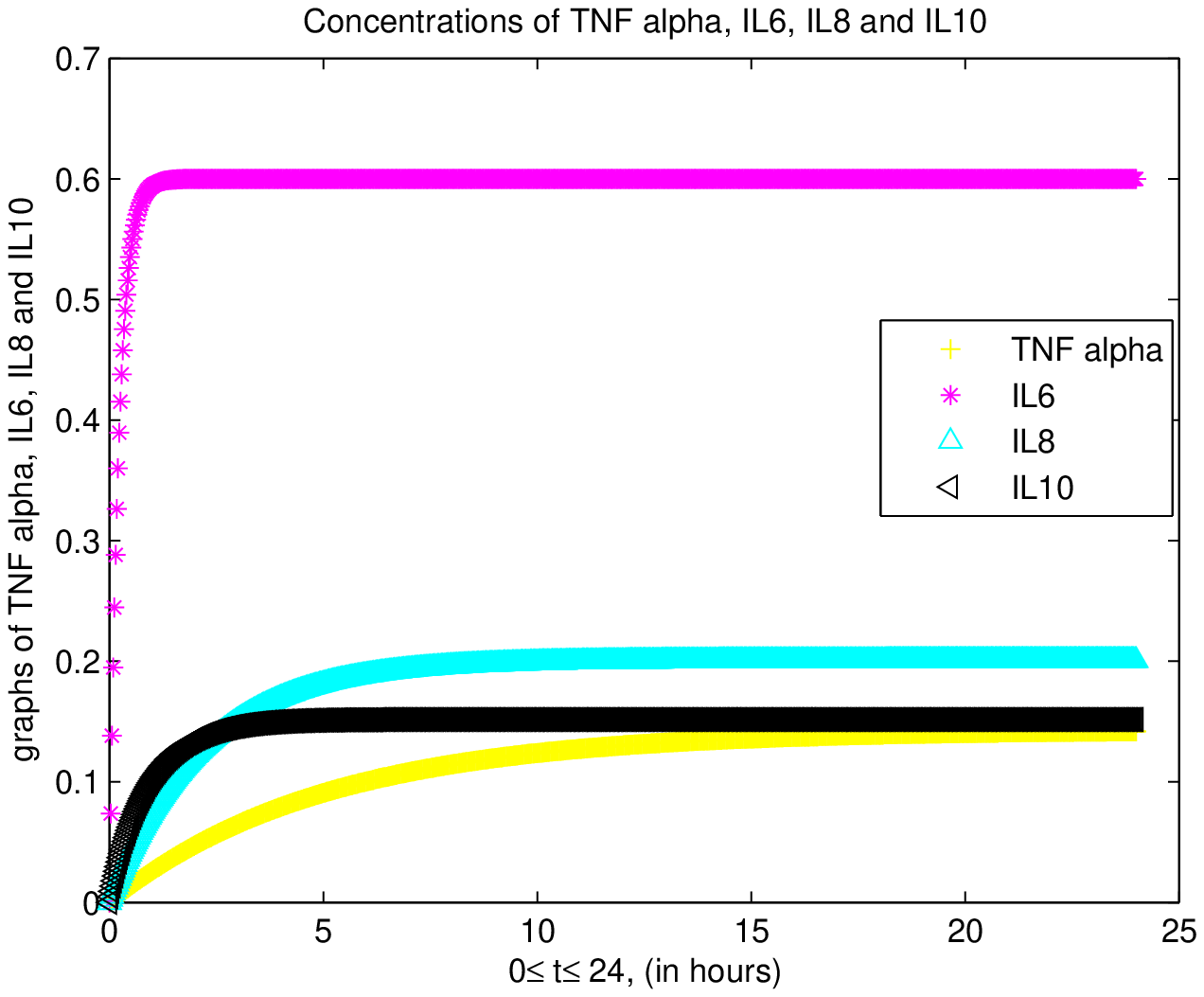,width=6cm}  & \\
         \end{tabular}
        \end{center}
        \caption{ Concentrations: S. aureus, resting macroph., activated macroph., TNF alpha, IL6, IL8 and IL10}
          \label{fig2}
          \end{figure}

          \begin{figure}
         \begin{center}
         Diffusion of S. aureus, macrophages and cytokines as provided in \cite{tgq}
          \begin{tabular}{c c}
         \psfig{file=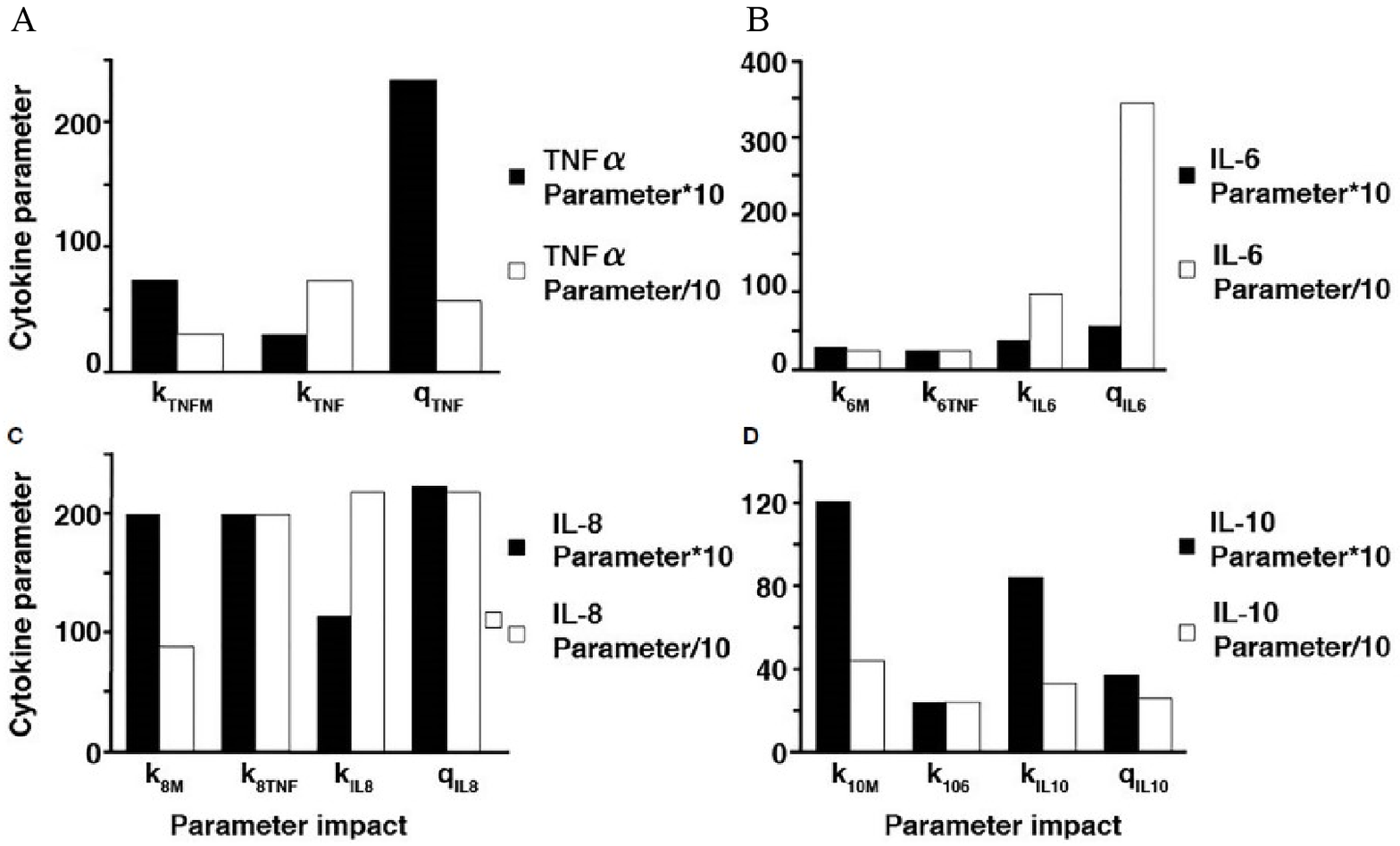,width=7cm}  & \psfig{file=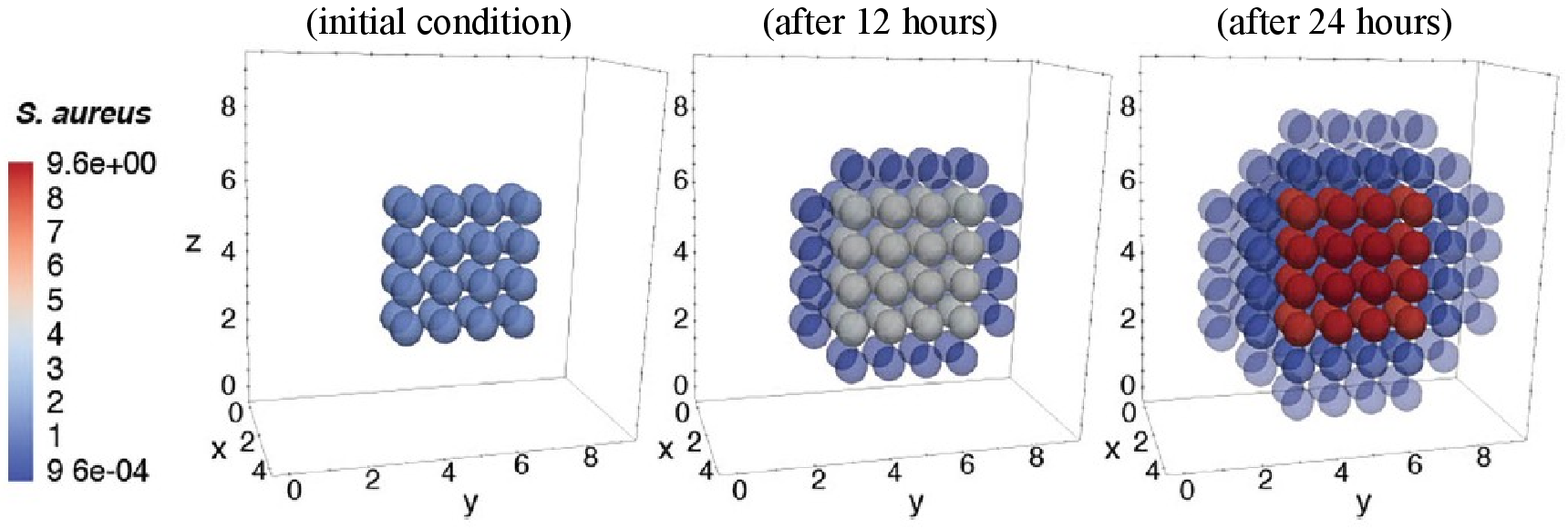,width=7cm}\\
         Parameter adjust.: TNF$\alpha$(A), IL6 (B), IL8 (C), IL10 (D)  &  S. aureus diffusion at different periods\\
         \text{\,}\\
         \psfig{file=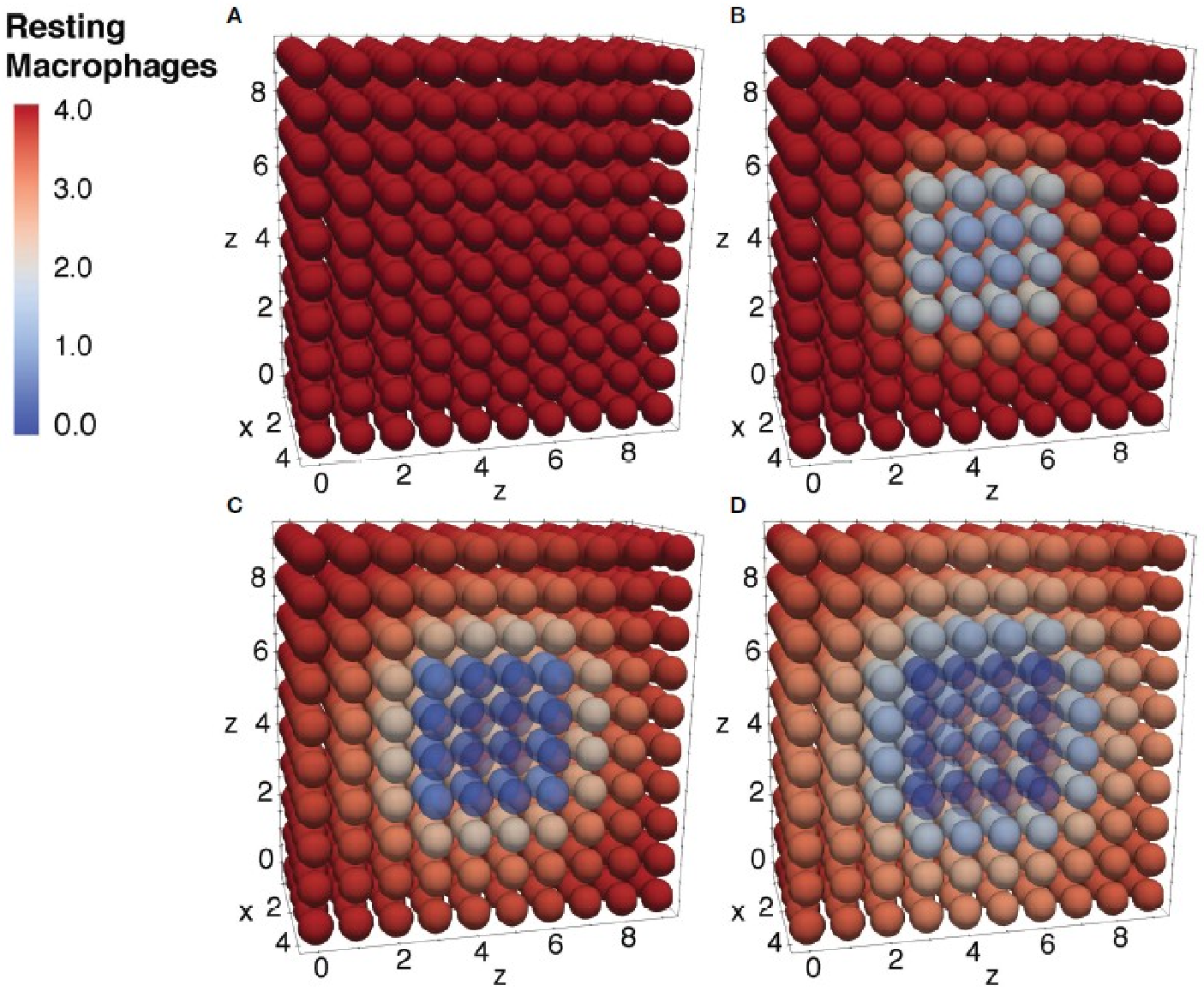,width=7cm} &  \psfig{file=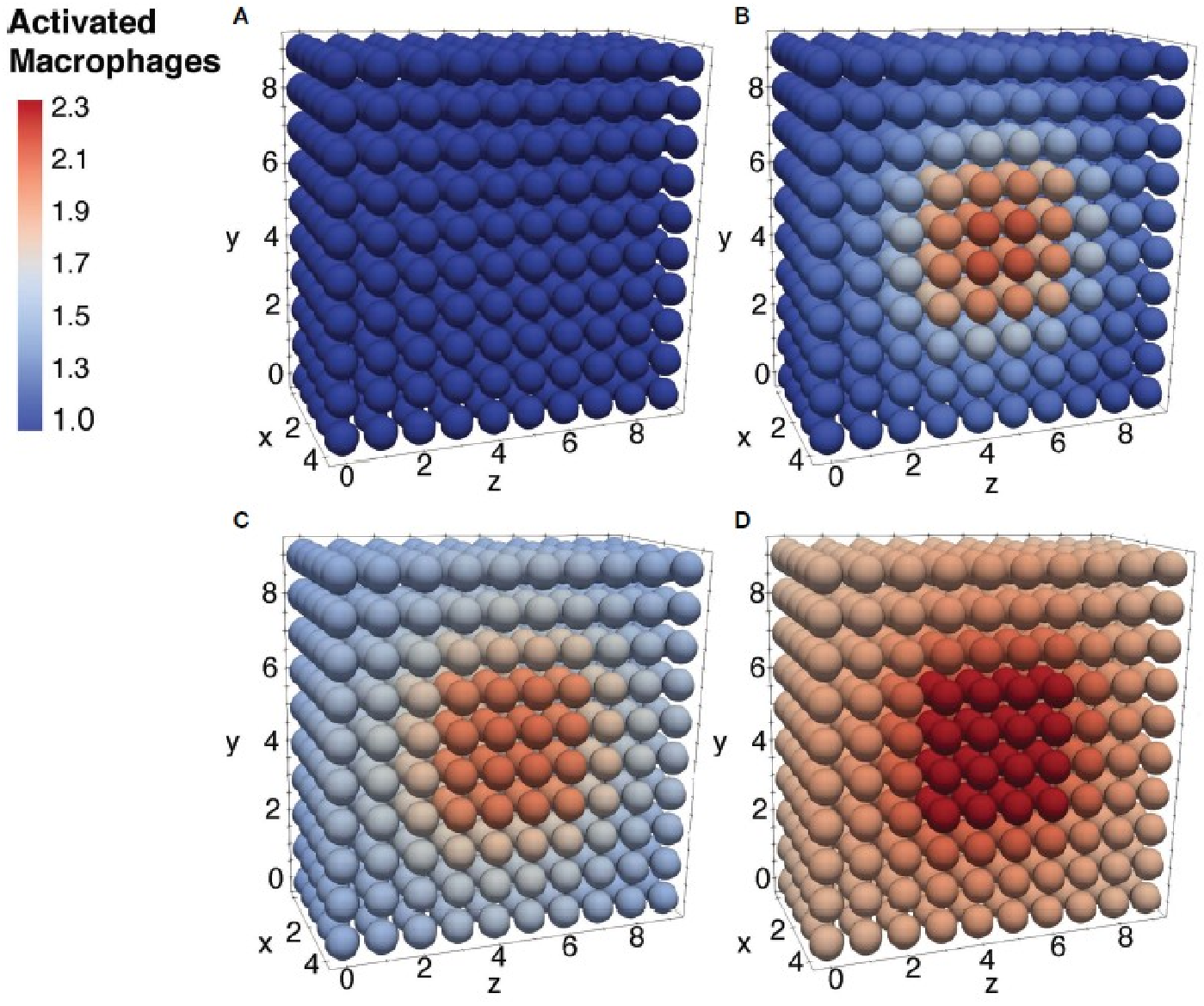,width=7cm}\\
         rest. macroph. diff.: oh(A), 3h(B), 12h(C), 24h(D)  & act. macroph. diff.: oh(A), 3h(B), 12h(C), 24h(D)\\
         \end{tabular}
        \end{center}
        \caption{ TNF alpha, IL6, IL8, IL10, S. aureus, resting macroph. and activated macroph.}
          \label{fig3}
          \end{figure}
     \end{document}